\newcommand{\ignore}[1]{ }
\newcommand{\Dn}{\mathcal{D}_n} %
\newcommand{\D}[1]{\mathcal{D}_{{#1}}} %
\newcommand{\db}{d_{\mathcal{B}}}
\newcommand{\dis}{\textrm{dis}}
\theoremstyle{plain}
\newtheorem{Pocz}{Poczatek}[section]
\newtheorem{Proposition}[Pocz]{Proposition}
\newtheorem{Theorem}[Pocz]{Theorem}
\newtheorem{Lemma}[Pocz]{Lemma}
\newtheorem{Example}[Pocz]{Example}
\theoremstyle{definition}
\newtheorem{Definition}[Pocz]{Definition}
\newtheorem{Remark}[Pocz]{Remark}
\newtheorem*{theorem*}{Theorem}
\def\RR{{\mathbb R}}
\def\GG{{\mathbb G}}
\def\NN{{\mathbb N}}
\def\f{{\varphi}}
\def\l{\lambda}
\def\UU{{\mathcal{U}}}
\def\VV{{\mathcal{V}}}
\numberwithin{equation}{section}
\author{Atish ~ Mitra}
\address{Montana Tech, USA}
\email{amitra@mtech.edu}
\author{\v Ziga Virk}
\address{University of Ljubljana, Slovenia, and Institute IMFM, Ljubljana, Slovenia}
\email{ziga.virk@fri.uni-lj.si}
\title[Geometric embeddings of spaces of persistence diagrams with explicit distortions]%
  {Geometric embeddings of spaces of persistence diagrams with explicit distortions}
\thanks{The authors have been supported by Slovenian Research Agency grant No. BI-US/18-20-060. The second named author was also supported by Slovenian Research Agency grants J1-4001, J1-4031, and P1-0292.}
\date{ \today
}
\keywords{}
\keywords{persistence landmarks, persistence diagrams, coarse embedding, explicit distortion, vectorization}
\subjclass[2020]{68T09, 55N31, 51F30, 62R40}
\begin{document}


\begin{abstract}
Let $n$ be a positive integer. We provide an explicit geometrically motivated $1$-Lipschitz map from the space of persistence diagrams on $n$ points (equipped with the Bottleneck distance) into the Hilbert space $\ell^2$. Such maps are a crucial step in topological data analysis, allowing the use of statistical methods (and thus data analysis) on collections of persistence diagrams. The main advantage of our maps as compared to most of the other such vectorizations is that they are coarse and uniform embeddings with explicit distortion functions. This allows us to control the amount of geometric information lost through their application. Furthermore, we also provide an explicit $1$-Lipschitz map from the space of persistence diagrams on $n$ points on a bounded domain into a Euclidean space with an explicit distortion function. We conclude with a differently flavored embedding of the space of persistence diagrams on $n$ points on a bounded domain into $\RR^{n(n+1)}$.

The  maps we construct are fairly simple, with each component depending only on the bottleneck distance to the corresponding ``landmark" persistence diagram. Due to geometric motivation from classical dimension theory, our methods are best described as quantitative dimension theory.  
\end{abstract}

\maketitle

\section{Introduction}

Persistent homology \cite{Edels} is one of the cornerstones of modern applied topology and an increasingly popular technique of topological data analysis (TDA). It is a construction where the well established theoretical background (such as stability theorems) and practical applications in the context of data science exhibit significant interplay to a mutual benefit. One of the major detractions to an even more widespread application of TDA is the fact that the persistence diagrams\textemdash the natural outputs of persistent homology computations\textemdash are not subsets of Hilbert space. This is problematic since the  statistical techniques and general data analysis typically require the structure of a finite dimensional vector space or a Hilbert space in order to apply the corresponding standard tools \cite{Mil, Turner1}. As a result, there have been multiple approaches (including \cite{Adams, Carlsson, Fasy, Bubenik, Carr, Carr1, Kali, Rein}) to map the various variants of the space of persistence diagrams into a Hilbert space or a finite-dimensional Euclidean space. For a review of some of these approaches and comparative studies see \cite{Perea} and \cite{Nanda}.

These approaches are typically Lipschitz maps from a corresponding space of persistence diagrams to a Hilbert space or Euclidean space, a property which is referred to as ``stability'' by some of the papers. In particular, if $f$ is such a map, then 
$$
d(f(A), f(B)) \leq L \cdot \db (A,B),
$$ 
where $d$ is the standard Euclidean or $\ell_2$ distance, $L>0$, and $\db$ is the bottleneck distance between persistence diagrams $A$ and $B$. This is a highly desirable property and implies that the pairwise distances do not increase beyond factor $L$ with application of such $f$. On the other hand, while many of these approaches construct an injective map $f$, there is a notable absence (except for a certain implicit setting in the $1$-Wasserstein distance \cite{Carr1}) of the quantified \textbf{distortion}, i.e., a map $\psi$ implying 
$$
d(f(A), f(B)) \geq \psi(\db (A,B)).
$$ 
This is quite problematic. It means that we have no quantified control on the amount of information lost through $f$. In particular, none of the above-mentioned approaches explains how to distinguish the images of persistence diagrams at the Bottleneck distance above at least a chosen threshold. In other words, there is no control on the discriminative properties of the mentioned approaches. In some of the settings it has been shown that certain distortion functions may not exist \cite{Bauer, Bell, Bub, MitV, Wag}, see also \cite{Weighill}. Such results typically follow from non-embedability results. 

In this paper we build on our previous work \cite{MitV, MitVCor} on the existence of coarse embeddings of the space of persistence diagrams $\D{n}$ on $n$ points. The mentioned works in particular imply that a distortion map exists above certain thresholds. In this paper we extend this existential result to all scales and turn it into explicit maps and provide explicit distortion functions.
The \textbf{main results} of this paper are explicit maps from $\D{n}$ (equipped with the Bottleneck metric) into Hilbert space:
\begin{itemize}
 \item Theorem \ref{ThmGluingScales}: a general framework for $1$-Lipschitz uniform and coarse embedding into Hilbert space. The illustrative example   \ref{ExUnif}  provides explicit distortion functions.
 \item Theorem \ref{ThmBoundedDis}: a general framework for maps of $\Dn$ on bounded domains into Euclidean space with controlled distortion. An explicit case is presented in Example \ref{ExampleFinDim}. 
\end{itemize}
Our maps are fairly simple and easily implementable. They consist of various combinations of bottleneck distances to specific persistence diagrams\textemdash which we think of as \textbf{landmark diagrams}. As a secondary result we prove that our framework may not yield an injective map into any Euclidean space (Proposition \ref{PropNotInj}). We then construct an embedding of $\Dn$ restricted to a bounded domain into Euclidean space (Example \ref{ExInjectMap}) using a different construction.

\textbf{Related results.} The following are some of the (non-)embedability results for spaces of persistence diagrams in the bottleneck metric:
\begin{enumerate}
 \item The space of persistence diagrams on finitely many points does not isometrically embed into Hilbert space \cite{Turner1}.
  \item The space of persistence diagrams on finitely many points does not coarsely embed into Hilbert space \cite{Bub, MitV}.
  \item The space of persistence diagrams on at most $n$ points does not bi-Lipschitzly embed into any finite-dimensional Euclidean space \cite{Bauer}.
  \item The space of persistence diagrams on at most $n$ points is of asymptotic dimension $2n$ and thus coarsely embeds into Hilbert space \cite{MitV, MitVCor}.
    \item The space of persistence diagrams on at most $n$ points  bi-Lipschitzly embeds into Hilbert space \cite{Bate}.
\end{enumerate}
Only the last two results are positive and neither of their proofs is by explicit geometric construction. We note that while \cite{Bate} considers the Wasserstein metric, the Bottleneck and Wasserstein metrics are coarsely equivalent when one restricts to diagrams of up to $n$ points (see  \cite{MitV}, Proposition 3.1). On a similar note, explicit distortions for a kernel of persistence diagrams on at most $n$ points on a bounded domain were obtained for the $1$-Wasserstein norm on persistence diagrams in \cite{Carr1} (see also the subsequent \cite{Divol}). Using the induced uniform embedding and the bi-Lipschitz equivalence of the $1$-Wasserstein and Bottleneck distances on the space of persistence diagrams on $n$ points, one could deduce a uniform embedding of the space of persistence diagrams on at most $n$ points on a bounded domain.  

On a related note, in \cite{Zava} the author shows that the space of isometry classes of metric spaces with at most $n$ points endowed with the Gromov-Hausdorff distance is of asymptotic dimension $n(n-1)/2$ and thus coarsely embeds into Hilbert space.

In this paper we provide an explicit geometric construction of the embedding of (4) in the coarse and uniform settings, along with their quantified approximations mapping into Euclidean space.

\textbf{On the origin of our construction.} In our previous work \cite{MitV, MitVCor} we showed that $\D{n}$ coarsely embeds into Hilbert space by proving it is of asymptotic dimension $2n$. This amounted to proving the existence of certain covers of $\D{n}$. In classical dimension theory, dimension is encoded by existence of certain open covers and one can assign appropriate partitions of unity subordinated to such covers and combine them to obtain useful maps to simplicial complexes. A similar approach is also used in the standard proof of the Nerve Theorem. However, while the mentioned embeddings have in the past been proved to admit appropriate upper and lower bounds on distortions,  specific quantifications have never been carried out\textemdash to the best of our knowledge. The main results of this paper are obtained by choosing explicit covers and functions imitating partitions of unity, for which we can define explicit maps and prove explicit distortions. To the best of our knowledge, this is the first time that these parameters have been optimized for computational purposes. Our construction can thus be referred to as one of the first examples of quantitative dimension theory and, in particular, the first incorporation of the dimension theory construction into the topological data analysis.

The structure of the paper is the following:
\begin{itemize}
\item Section 2: Preliminaries.
\item Section 3: Treatment of maps arising from covers in general and in our specific case. 
\item Sections 4: Theorem \ref{ThmGluingScales} (coarse and uniform embeddings into the Hilbert space) along with the corresponding  illustrative Examples  \ref{ExUnif}.
\item Sections 5: Theorem \ref{ThmBoundedDis} (maps of diagrams on bounded domains into Euclidean spaces) and the corresponding Example \ref{ExampleFinDim}. 
\item Section 6: Discussion on our specific maps, choices, and potential improvements, along with the final description of maps.
\end{itemize}


\section{Preliminaries}
\label{SectP}

\subsection{Persistence diagrams}

In this subsection we recall the structure of the space of persistence diagrams. For a classical introduction to these objects see \cite{Edels} (see also Remark \ref{RemDB}). For most of our purposes we will follow the approach below introduced in \cite{MitV}, \cite{MitVCor}.

Notation $\D{1}$ represents the \textbf{space of persistence diagrams} on $1$ point. As a set it equals $\D{1}=T \cup \{\Delta\} $ where $ T=\{(x_1,x_2)\in \RR^2 \mid x_2 > x_1 \geq 0\} $, while $\Delta$ is an additional point representing the diagonal $\{(x,x)\in \RR^2 \mid x \geq 0\}$. On the space $\D{1}$ we define the \textbf{Bottleneck distance} $\db$ as follows:
\begin{itemize}
 \item  $\db((x_1, x_2), \Delta) = |x_1-x_2|/2$, and
 \item the distances on $\D{1}$ is
 $$
 \db((x_1, x_2), (y_1, y_2))= \min \Big\{  
\max \big\{ |x_1-y_1|, |x_2-y_2|\big\},
 \max \big\{
 \db((x_1, x_2),\Delta), 
 \db((y_1, y_2),\Delta)
 \big\}\Big\}.
 $$
\end{itemize}
The definition of $\db$ in the last line is the minimum of two terms. The first term is the $d_\infty$ distance between the points and corresponds to the matching between the points in the standard definition of the bottleneck metric. The second term corresponds to the matching of both points to the diagonal $\Delta$.

Fix $n\in \{1,2,\ldots\}$. We define the max metric $\db^n$ on the product  space $(\D{1}^n, \db^n)$ in the usual way, by
$$
\db^n((\bar{x}_1, \bar{x}_2, \ldots, \bar{x}_n),(\bar{y}_1, \bar{y}_2, \ldots, \bar{y}_n))=\max \{\db(\bar{x}_i, \bar{y}_i)\mid i\in \{1,2,\ldots, n\}\}.
$$

The symmetric group on $n$ elements, $S_n$, acts on $(\D{1}^n, \db^n)$ by permutations of coordinates. The \textbf{space of persistence diagrams} on $n$ points, $(\D{n}, \db)$, is the quotient  $(\D{1}^n, \db^n)/S_n$:
\begin{enumerate}
 \item Elements of $\D{n}$ are orbits of the $S_n$ action on $\D{1}^n$. In particular, elements of $\D{n}$ are $n$-tuples $[\bar{x}_1, \bar{x}_2, \ldots, \bar{x}_n]$ with identifications $[\bar{x}_1, \bar{x}_2, \ldots, \bar{x}_n]=[\bar{y}_1, \bar{y}_2, \ldots, \bar{y}_n]$ iff $\exists \psi \in S_n: \bar{x}_i = \bar{y}_{\psi(i)}, \forall i$. We will often think of persistence diagrams as multisets, i.e., collections of $n$ points from $\D{1}$ with potential repetitions.
 \item The metric $\db$ is defined as follows (see \cite{Kas} for the same metric):
$$
\db([\bar{x}_1, \bar{x}_2, \ldots, \bar{x}_n],[\bar{y}_1, \bar{y}_2, \ldots, \bar{y}_n])=\min_{\psi \in S_n} \{\db(\bar{x}_i, \bar{y}_{\psi(i)})\mid i\in \{1,2,\ldots, n\}\}.
$$
\end{enumerate}
 
Space $\D{n}$ naturally contains $\D{k}$ for all $k \leq n$: given a diagram on $k$ points, its representative in $\D{n}$ is obtained be adding $n-k$ instances of $\Delta$ to the diagram. This induces isometric embeddings $\D{k} \hookrightarrow \D{n}$.

\begin{Remark}
\label{RemDB}
We also point out a more standard way of defining the Bottleneck distance (this approach is used in Section \ref{PathsDn}). Take two persistence diagrams $A=\{a_1, a_2, \ldots, a_n\}$ and $B=\{b_1, b_2, \ldots, b_k\}$, given by the multisets of points in $\D{1}$. A \emph{partial matching} between $A$ and $B$ is a bijection $\phi\colon A' \to B'$, where $A' \subseteq A$ and $B' \subseteq B$. Let $M(A, B)$ denote the collection of all partial matchings between $A$ and $B$. Then the bottleneck distance can be expressed as
\[
\db(A, B) = \min_{(\phi\colon A' \to B') \in M(A, B)} \max \Big\{    
\max_{a \in A'}\{d_1(a, \phi(a))\}, 
\max_{a \in A \setminus A'}\{d_1(a, \Delta)\}, 
\max_{b \in B \setminus B'}\{d_1(b, \Delta)\}
\Big\},
\]
with $d_1((x_1, x_2),\Delta)=|x_1-x_2|/2$ being the $d_1$ distance to the diagonal $\Delta$. 
\end{Remark}

\subsection{Paths in $\D{n}$}\label{PathsDn}

It turns out that $\cup_n \D{n}$ is \textbf{geodesic} (\cite[Proposition 2.3]{Turner1}, see also \cite{Chow}), i.e., for each pair of diagrams $A,B\in \cup_n \D{n}$ there exists an isometric embedding $\gamma \colon [0,\db(A,B)] \to \cup_n \D{n}$ mapping $0 \mapsto A$ and $\db(A,B) \mapsto B$. The said geodesic is defined as follows (in the notation of Remark \ref{RemDB}). Take a partial matching $(\phi\colon A' \to B') \in M(A, B)$ realizing $\db(A, B)$. Now transform $A$ into $B$ by linearly sending:
\begin{itemize}
 \item each $a\in A'$ towards $\phi(A)\in B'$;
 \item each $a\in A \setminus A'$ towards $\Delta$,
  \item each $b\in B \setminus B'$ towards $\Delta$.
\end{itemize}
Unfortunately though, if $A, B \in \D{n}$, then the geodesic between them may not lie in $\D{n}$ but rather in $\D{2n}$ (this happens when the optimal matching involves the diagonal). Hence $\D{n}$ is not geodesic, as the example with diagrams $\{(1,5)\}, \{(101, 105)\} \in \D{1}$ demonstrates, see Figure \ref{Fig0}. The next lemma shows that each $\D{n}$ is geodesic up to factor $2$.
\begin{Lemma}
 \label{LemmaGeodesic}
Fix $n \in \{1, 2, \ldots\}$.  For each pair of diagrams $A, B$ there is a path $\gamma$ in $\D{n}$ from $A$ to $B$ of length at most $2 \db(A, B)$.
 \end{Lemma}
\begin{proof}
 Imitating the construction of a geodesic preceding the lemma and the notation of Remark \ref{RemDB}, we construct path $\gamma$ from two sections as follows. Take a partial matching $(\phi\colon A' \to B') \in M(A, B)$ realizing $\db(A, B)$. First take the geodesic from $A$ to $B'$, by sending each $a\in A'$ towards $\phi(A)\in B'$,
and each $a\in A \setminus A'$ towards $\Delta$. Second, take the geodesic from $B'$ to $B$ by sending for each $b\in B \setminus B'$, $\Delta$ towards $b$. Each of these two segments is a geodesic in $\D{n}$, hence the concatenation is a path in $\D{n}$ from $A$ to $B$ of length at most $2 \db(A, B)$.
\end{proof}

\begin{figure}
\includegraphics[width=6cm]{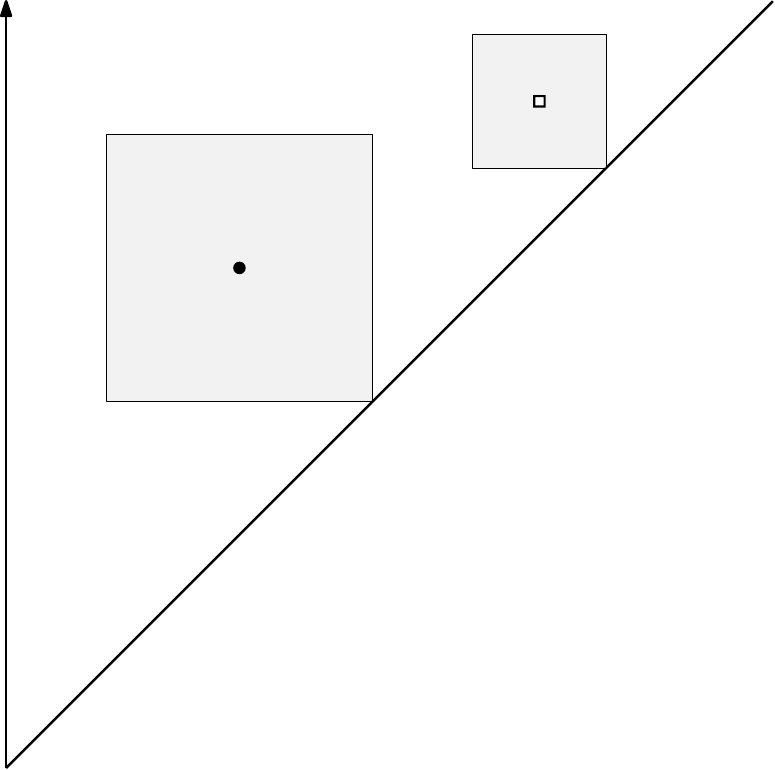}
\includegraphics[width=6cm]{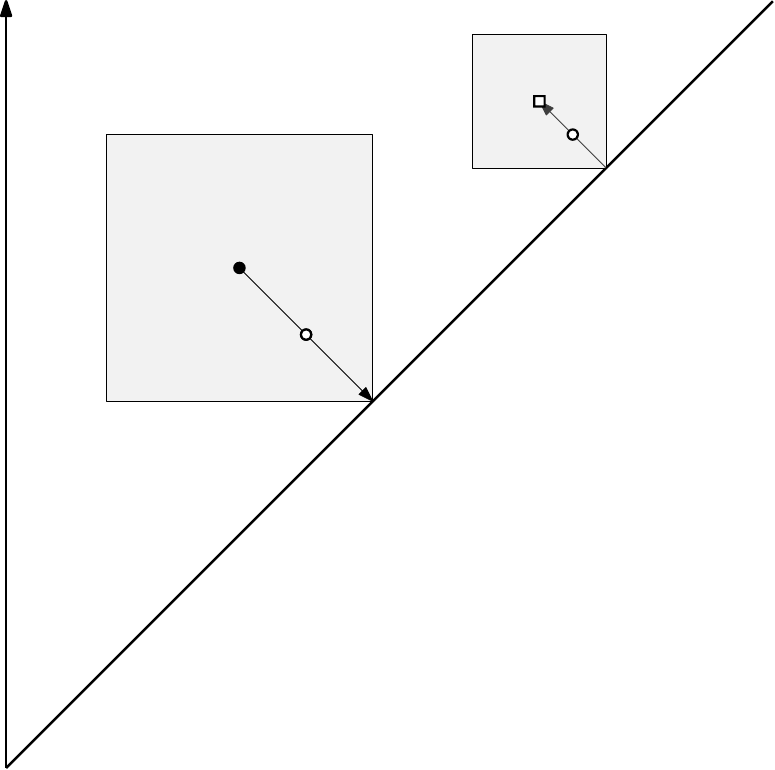}
\caption{On the left are two persistence diagrams in $\D{1}$, one given by $\bullet$, the other with $\square$. The grey squares indicate the closest points of $\Delta$. The geodesic between the two diagrams consists of two-point diagrams, one of which is indicated by the two $\circ$ points. Lemma \ref{LemmaGeodesic} states that if we slide the point of the first diagram to $\Delta$ first, and then slide $\Delta$ towards the point of the second diagram, we obtain a path in $\D{1}$ of length twice the bottleneck distance between them. }
\label{Fig0}
\end{figure}

\subsection{Metric geometry}

In this subsection we recall the concepts of metric geometry that we use throughout the paper.  The following definition summarizes the various concepts of embeddings that we use.

\begin{Definition}
Let $f:X \to Y$ be a function between metric spaces.
\begin{enumerate}
\item $f$ is said to be \textbf{Lipschitz} if there is $\Lambda > 0$ such that $d_Y(f(x_1),f(x_2)) \le \Lambda \cdot d_X(x_1,x_2)$. We occasionally call such map $\Lambda$-Lipschitz.
\item $f$ is  said to be a \textbf{coarse embedding} if  there are non-decreasing functions $\rho_{-}, \rho_+ \colon [0,\infty) \to [0, \infty)$  with $\rho_{-}(d_X(x_1,x_2)) \le d_Y(f(x_1),f(x_2)) \le \rho_{+}(d_X(x_1,x_2)) $ and with $\lim_{t \to \infty} \rho_{-}(t) = \infty$.
\item A coarse embedding $f$ is said to be a \textbf{quasi-isometric} embedding if the functions  $\rho_{-,+}$ are linear.
\item A quasi-isometric embedding is said to be a \textbf{bi-Lipschitz} embedding if the functions  $\rho_{-,+}$  are dilations, i.e., $\rho_{-} (t)=a \cdot t$ and  $\rho_{+} (t)=b \cdot t$, where $a,b >0$.
\item $f$ is   said to be a \textbf{uniform embedding} if  there are non-decreasing functions $\rho_{-}, \rho_+ \colon [0,\infty) \to [0, \infty)$  with $\rho_{-}(d_X(x_1,x_2)) \le d_Y(f(x_1),f(x_2)) \le \rho_{+}(d_X(x_1,x_2)) $, with $\lim_{t \to 0^{+}} \rho_{+}(t) = 0$ and with $ \rho_{-}(t)  > 0 $ whenever $t > 0$ .
\end{enumerate}
\end{Definition}

Coarse embeddings and quasi-isometric embeddings are asymptotic concepts.  They control the change of distances between points at large scales. On the other hand, bi-Lipschitz and uniform embeddings control the change of distances at all scales.

For our applications, we are interested in embeddings in the Hilbert space $\ell^2$, the space of square-summable sequences of real numbers with point-wise operations that make it into a vector space.  




\section{Covers and maps at a single scale}
\label{SCover}

Let us fix a scale $R>0$. In this section we:
\begin{itemize}
 \item construct a cover of $\Dn$ at scale $R$ (Definitions \ref{DefCover1} and \ref{DefCoverN});
 \item construct a corresponding map to the Hilbert space $\ell^2$ at scale $R$ (Definitions \ref{DefF1} and \ref{DefFR}).
\end{itemize}

\subsection{On $\D{1}$}
We begin our construction by treating the space of diagrams on one point first. 

\begin{Definition}
 Let $R\GG$ denote the collection of persistence diagrams in $\D{1}$, whose single point is any of the following:
 $$
 \{(m R, n R) \mid m\in \{1,3,5,\ldots\}, n\in \{4,6,8,\ldots\}, n \geq m + 3\}.
 $$
In particular, for each point of this collection there is a corresponding persistence diagram in $\D{1}$. We also define $R\GG^+=R\GG  \cup \{\Delta\}.$
\end{Definition}

\begin{figure}
\includegraphics[width=7cm]{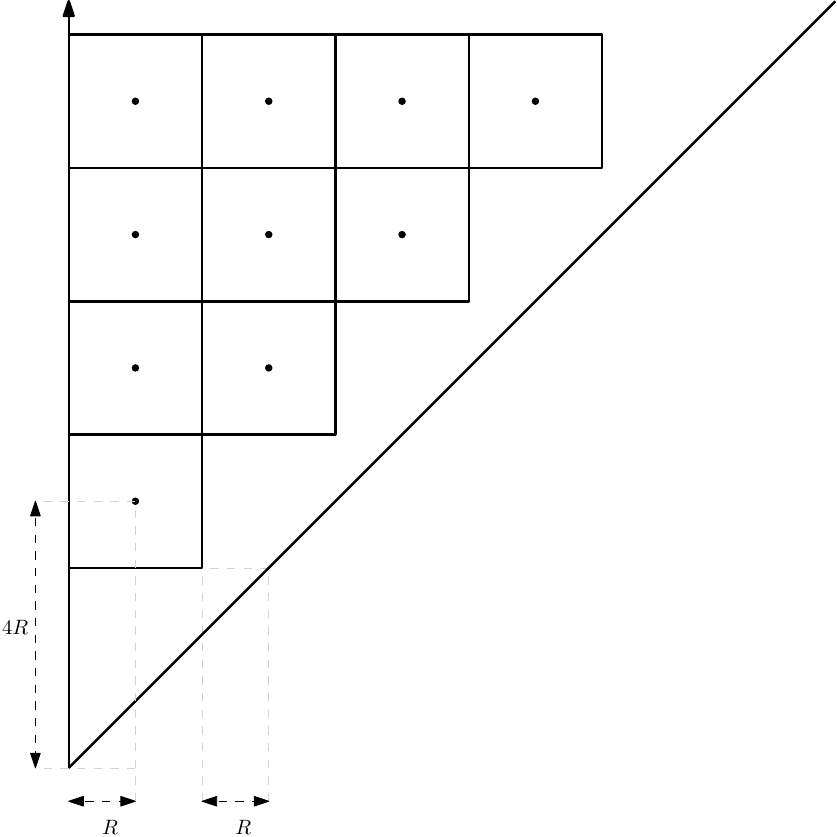}
\caption{$R\GG$.}
\label{Fig1}
\end{figure}

We think of $R\GG$\textemdash a grid at scale $R$\textemdash as a set of landmark diagrams.
The points of $R\GG$  appear as bullets in Figure \ref{Fig1}. We can see that the closed $R$-balls around these points (diagrams) cover the entire space $\D{1}$ except for a zig-zag neighborhood of $\Delta$, which can be covered by the closed $3R/2$-ball around $\Delta$. This observation motivates the following open cover of $\D{1}$.

\begin{Definition}
\label{DefCover1}
Let 
$$
R\UU = 
\{ 
B(p, 3R/2) \mid p\in R\GG^+
\}.
$$
\end{Definition}

Cover $R\UU$ of $\D{1}$ consists of open $3R/2$-balls (squares) around points of $R\GG$ and of the strip $B(\Delta, 3R/2)$  along the diagonal, see Figure \ref{Fig2}.
 
\begin{figure}
\includegraphics[width=4.5cm]{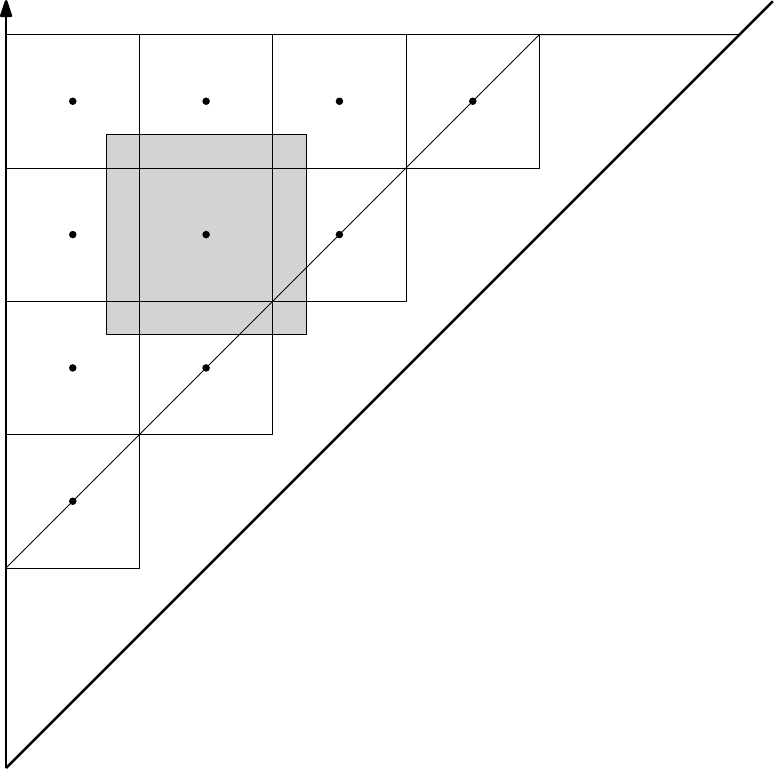}
\includegraphics[width=4.5cm]{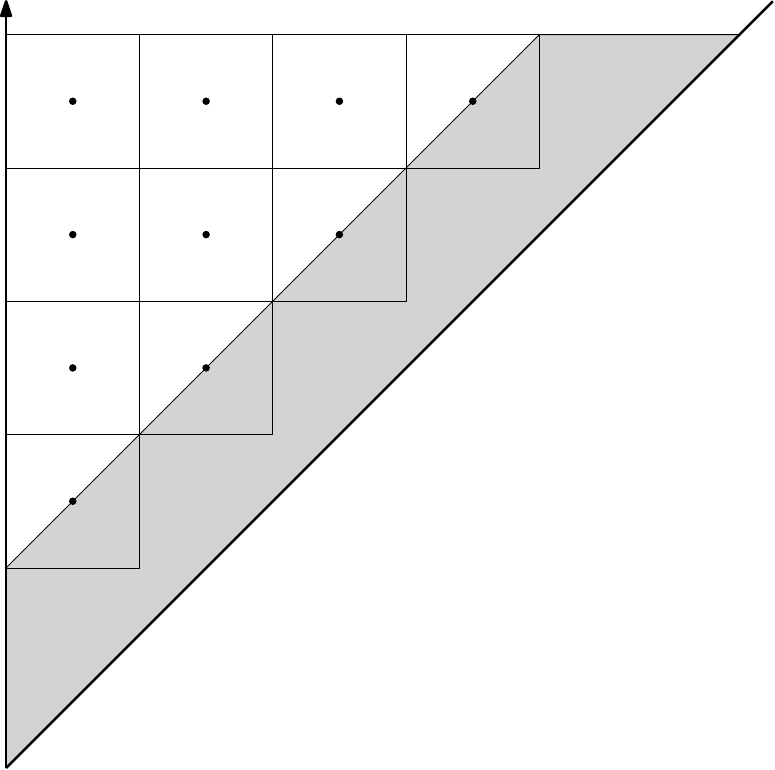}
\includegraphics[width=4.5cm]{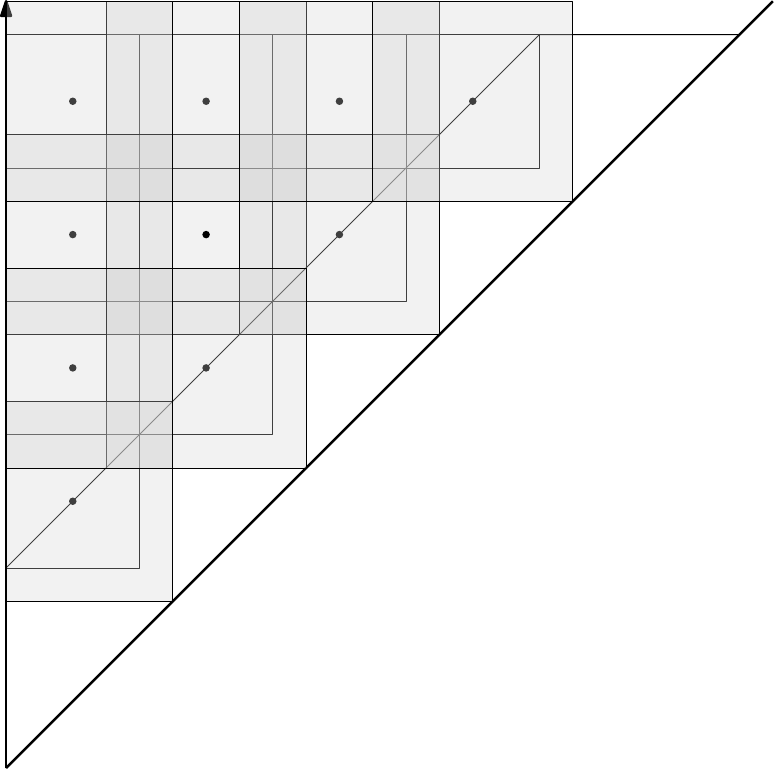}
\caption{Elements of $R\UU$ of Definition \ref{DefCover1}: open $3R/2$-balls around the points of $R\GG$ (left) and the open $3R/2$-ball around the diagonal $\Delta$ (center). A sketch of the cover using opaque squares is given on the right. We can see that the multiplicity of this portion is $4$. The final cover is obtained by adding the ball around $\Delta$. It should be apparent that this addition does not increase the multiplicity.}
\label{Fig2}
\end{figure}

\begin{Lemma}
\label{LemmaMuDiam}
The multiplicity of $R\UU$ equals $4$. The diameter of sets in $R\UU$ is at most $3R$.
\end{Lemma}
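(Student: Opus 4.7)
The diameter claim is immediate: each element of $R\UU$ is an open ball of radius $3R/2$ in $(\D{1}, \db)$, so the triangle inequality yields diameter at most $3R$.

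For the multiplicity I plan a three-step argument. First, I would rewrite each grid ball $B(p, 3R/2)$ with $p = (mR,nR) \in R\GG$ in a simpler form. The defining formula for $\db$ is a minimum of a $d_\infty$-type term and a diagonal-type term, and the condition $n \geq m+3$ forces the diagonal term $\max(|x_1-x_2|/2, (n-m)R/2)$ to be at least $(n-m)R/2 \geq 3R/2$, so it never drops below the radius. Inside such a ball the $d_\infty$ term must therefore realise the minimum, and $B(p, 3R/2)$ coincides with the open $d_\infty$-square of side $3R$ centred at $p$, intersected with $T$.

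Second, I would count overlaps among the grid balls. Two squares centred at $(mR,nR)$ and $(m'R,n'R)$ meet only if $|m-m'|<3$ and $|n-n'|<3$; the parity restrictions ($m,m'$ odd, $n,n'$ even) then pin both differences into $\{0,2\}$. So the centres of any cluster of overlapping grid balls lie in some $2\times 2$ block $\{m_0, m_0+2\}\times\{n_0, n_0+2\}$, giving the bound $4$. Equality is attained at the centre of such a block whenever all four corners lie in $R\GG$, which happens precisely for $n_0-m_0 \geq 5$.

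Third, I would verify that the diagonal strip $B(\Delta, 3R/2)$ does not raise the multiplicity. This is the delicate step. Assume for contradiction that a point $(x,y)$ lies in $4$ grid balls (so by Step 2, $n_0-m_0 \geq 5$) and also in $B(\Delta, 3R/2)$. The four ball constraints force $x \in ((m_0+\tfrac12)R, (m_0+\tfrac32)R)$ and $y \in ((n_0+\tfrac12)R, (n_0+\tfrac32)R)$, hence $y-x > (n_0-m_0)R - R \geq 4R$, which contradicts $y-x < 3R$ imposed by membership in $B(\Delta, 3R/2)$. Thus inside the diagonal strip at most $3$ grid balls meet, and the total multiplicity is at most $4$; this bound is realised either at the midpoint of a full $2\times 2$ grid cluster, or near a $3$-cluster with $n_0-m_0=3$ that lies inside the diagonal strip.

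The main obstacle is Step 3: the geometric interaction between the grid balls and the diagonal strip. This is where the condition $n \geq m+3$ in the definition of $R\GG$ is tight, providing exactly the slack needed so that the multiplicity caps at $4$ overall rather than $5$.
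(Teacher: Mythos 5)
Your proposal is correct and follows essentially the same geometric reasoning that the paper delegates to Figure \ref{Fig2}: the grid balls are $d_\infty$-squares of side $3R$ (the condition $n\geq m+3$ killing the diagonal term), parity of the grid coordinates caps overlapping squares at a $2\times 2$ cluster, and the strip $B(\Delta,3R/2)$ cannot raise the count to $5$, while the diameter bound is the triangle inequality exactly as in the paper. The only difference is that you spell out rigorously (Steps 1--3, in particular the $y-x>4R$ versus $y-x<3R$ contradiction) what the paper's proof leaves to the picture, which is a welcome strengthening rather than a different route.
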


\begin{proof}
For the first statement see Figure \ref{Fig2}.
 Diameter of $r$-balls is at most $2 \cdot r$. In our case this gives the bound $3R$. That this is indeed the maximal diameter in our case is demonstrated by any pair of pairwise distant points at bottleneck distance $3R/2$ from the diagonal. 
\end{proof}

We are now in a position to define functions acting as local coordinates on $\D{1}$.

\begin{Definition}
 For each $p\in R\GG^+$ define $\f_{R,p} \colon \D{1}\to [0,R]$ as 
 $$
\f_{R,p}(x)= \max \{3R/2-\db(p,x),0\}.
 $$
\end{Definition}

\begin{Lemma}
\label{LemSum1}
For each $z\in \D{1}$ there exist $p\in R\GG^+$ such that 
$$
\f_{R,p}(z)  \geq R/8.
$$
\end{Lemma}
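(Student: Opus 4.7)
The plan is to translate the target inequality $\f_{R,p}(z) \geq R/8$ into the equivalent statement that some $p \in R\GG^+$ satisfies $\db(p,z) \leq 3R/2 - R/8 = 11R/8$, and then to split cases on the distance from $z$ to the diagonal.

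First, if $\db(z, \Delta) \leq 11R/8$, I would simply take $p = \Delta$. Otherwise I write $z = (x_1, x_2) \in T$, so that $\db(z, \Delta) > 11R/8$ is equivalent to $x_2 - x_1 > 11R/4$. In this regime every candidate grid point $q = (mR, nR) \in R\GG$ has $\db(q, \Delta) = (n-m)R/2 \geq 3R/2 > 11R/8$, hence the ``match both endpoints to $\Delta$'' branch of the bottleneck metric cannot yield a value $\leq 11R/8$. I would therefore have to produce $(m, n)$ for which the point-matching branch suffices: $\max\{|mR - x_1|,\, |nR - x_2|\} \leq 11R/8$, subject to $m$ odd, $n$ even, $m \geq 1$, $n \geq 4$, and $n \geq m + 3$.

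My explicit choice is $m = $ smallest odd integer in $[\max(1, x_1/R - 11/8),\, x_1/R + 11/8]$ and $n = $ largest even integer in $[x_2/R - 11/8,\, x_2/R + 11/8]$. Each such interval has length $11/4 > 2$ and therefore contains integers of the prescribed parity; the hypothesis $x_2 > 11R/4$ furthermore forces $x_2/R + 11/8 > 4$, so $n \geq 4$ is automatic. The numerical bounds $|mR - x_1|,\, |nR - x_2| \leq 11R/8$ are then built into the construction.

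The main obstacle is verifying the compatibility condition $n \geq m + 3$. I expect to argue as follows: maximality of $n$ among even integers $\leq x_2/R + 11/8$ gives $n > x_2/R - 5/8$, and, whenever $m > 1$, minimality of $m$ among the admissible odd integers gives $m < x_1/R + 5/8$. Adding these and using $x_2 - x_1 > 11R/4$ yields $n - m > (x_2 - x_1)/R - 5/4 > 3/2$, and since $n - m$ is an odd positive integer, it must be at least $3$. The boundary case $m = 1$ (forced when $x_1 \leq 19R/8$) reduces immediately to the already-established $n \geq 4$. Hence $p := (mR, nR)$ lies in $R\GG$ and satisfies $\db(p,z) \leq 11R/8$, completing the argument.
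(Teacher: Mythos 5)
Your proposal is correct, and it reaches the conclusion by a different route than the paper. The paper's argument is pictorial and multi-threshold: it first notes that the closed $R$-balls around the grid diagrams force $\f_{R,p}\ge R/2$ on most of $\D{1}$, and then disposes of the complementary triangles near the diagonal by a region-by-region inspection of figures (with the bound degrading to $R/4$ and $R/8$ there). You instead work with the single equivalent condition $\db(p,z)\le 11R/8$, split on whether $\db(z,\Delta)\le 11R/8$ (take $p=\Delta$) or not, and in the latter case construct $(mR,nR)\in R\GG$ explicitly by parity-constrained integer selection in the two coordinate intervals, using the separation $x_2-x_1>11R/4$ together with the minimality/maximality bounds $m<x_1/R+5/8$ (when $m>1$) and $n>x_2/R-5/8$ to force $n-m>3/2$, hence $n-m\ge 3$ by parity; the boundary case $m=1$ correctly reduces to $n\ge 4$. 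This is exactly the delicate point ($n\ge m+3$, $n\ge 4$) that the paper's figures gloss over, so your version is more self-contained and fully quantitative, at the cost of not recording the slightly better bounds (e.g.\ $3R/16$ in places) that the geometric decomposition exhibits. One small imprecision: your justification ``each interval has length $11/4>2$'' does not literally apply to the truncated interval $[\max(1,x_1/R-11/8),\,x_1/R+11/8]$ when $x_1\le 19R/8$, but in that case the interval contains the odd integer $1$ (its lower endpoint), so the choice $m=1$ exists and the bound $|R-x_1|\le 11R/8$ still holds; the argument goes through unchanged.
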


\begin{proof}
\begin{figure}
\includegraphics{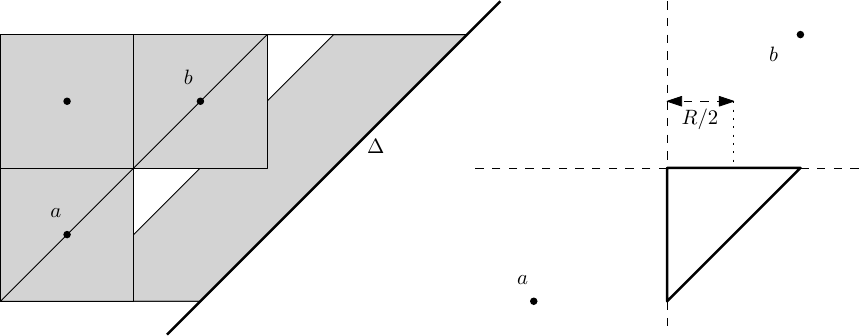}
\caption{Sketch for Lemma \ref{LemSum1}: the region where at least one of the functions $\f_{R,p}$ has value at least $R/2$ (left), and the complementary triangles (right).}
\label{Fig6}
\end{figure}
For each $p\in R\GG^+$ the function $\f_{R,p}$ attains values at least $R/2$ on the closed ball $\overline B(p,R)$. In our decomposition, these balls form the shaded region in Figure \ref{Fig6} (left), whose complement is a collection of triangles close to the diagonal. It remains to provide the proof for points $z$ from all such a triangles. This is demonstrated in Figures  \ref{Fig6a} and \ref{Fig7a} for the lowest such triangle, and on Figure \ref{Fig7} for all other triangles.
\begin{figure}
\includegraphics{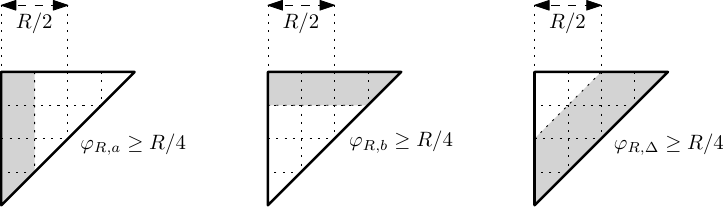}
\caption{Sketch for Lemma \ref{LemSum1}: regions where functions are at least $R/4$ using notation for $a$ and $b$ form Figure \ref{Fig6}.}
\label{Fig7}
\end{figure}
\end{proof}

\begin{figure}
\includegraphics{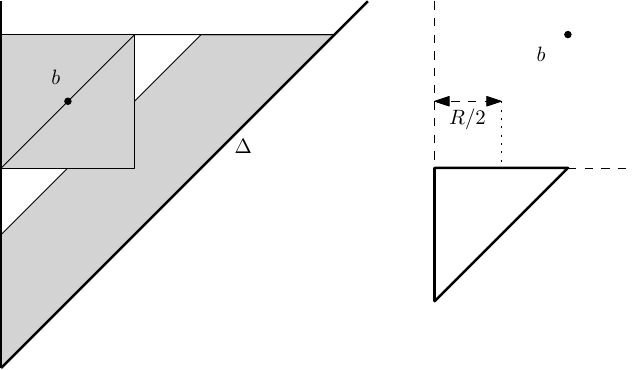}
\caption{Sketch for Lemma \ref{LemSum1}: the bottom part of the region where at least one of the functions $\f_{R,p}$ has value at least $R/2$ (left), and the lowest complementary triangle (right).}
\label{Fig6a}
\end{figure}

\begin{figure}
\includegraphics{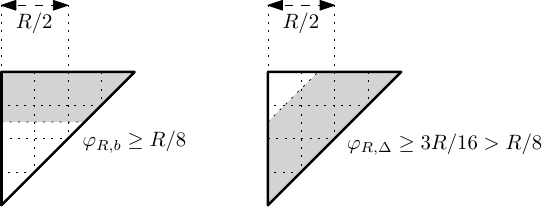}
\caption{Sketch for Lemma \ref{LemSum1}: regions of the lowest complemantary triangle where functions are at least $R/8$ using notation for  $b$ form Figure \ref{Fig6a}. The difference as compared to the argument of Figure \ref{Fig6} is that there is no shaded region coming from the left.}
\label{Fig7a}
\end{figure}

As a sidenote we point out that on the right side of Figure \ref{Fig7a}, the lower bound is in fact $3R/16$. Extending the grey region on this part of the figure and shrinking it in the left part of the image we could thus use the same argument to prove Lemma \ref{LemSum1} for a lower bound slightly higher than $R/8$. However, for the sake of clarity we refrain from such endeavour. 

\begin{Lemma}
\label{LemLip1}
 For each $p\in R\GG^+$ function $\f_{R,p}$ is $1$-Lipschitz.
\end{Lemma}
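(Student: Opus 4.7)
The plan is to derive the Lipschitz property of $\f_{R,p}$ from that of the bottleneck distance function $x \mapsto \db(p,x)$, using two elementary stability facts about Lipschitz functions. The crucial point is that $\db$ is a metric on $\D{1}$ (this is implicit in the preliminaries, where $\db$ is introduced as the bottleneck distance on $\D{1}$), so the reverse triangle inequality applies.

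First, I would record that for any fixed $p \in R\GG^+$, the map $g_p \colon \D{1} \to \RR$ given by $g_p(x) = \db(p,x)$ satisfies
$$|g_p(x) - g_p(y)| = |\db(p,x) - \db(p,y)| \leq \db(x,y)$$
by the reverse triangle inequality for the metric $\db$. Thus $g_p$ is $1$-Lipschitz. Next, the affine modification $x \mapsto 3R/2 - g_p(x)$ is also $1$-Lipschitz, since adding a constant and multiplying by $-1$ preserves the Lipschitz constant.

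Finally, I would invoke the fact that the pointwise maximum of two $1$-Lipschitz functions is $1$-Lipschitz. Applying this to $h(x) = 3R/2 - \db(p,x)$ and the constant function $0$ gives that
$$\f_{R,p}(x) = \max\{3R/2 - \db(p,x), 0\}$$
is $1$-Lipschitz, which is the claim.

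I do not anticipate a genuine obstacle here: the argument is the standard ``truncated distance function'' template. The only thing to be careful about is that the bottleneck distance on $\D{1}$ as defined in the preliminaries (with the minimum-of-two-terms formula involving the diagonal $\Delta$) does indeed satisfy the triangle inequality; but since the excerpt treats $(\D{1}, \db)$ as a metric space from the outset, this may be cited rather than reverified.
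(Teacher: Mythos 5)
Your proof is correct and follows the same route the paper takes: the paper simply says the lemma is a direct consequence of the triangle inequality, and your argument spells out exactly that, via the reverse triangle inequality for $\db$ together with the standard facts that affine modification and taking a maximum with a constant preserve the Lipschitz constant.
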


\begin{proof}
 The lemma is a direct consequence of the triangle inequality.
\end{proof}

We now assemble local coordinates into a single map.

\begin{Definition}
\label{DefF1}
Define $\f_R \colon \D{1}\to [0,3R/2]^{|R\UU|}$ as 
 $$
 \f_R(x)= (\f_{R,p}(x))_{p \in R\GG^+}.
 $$
\end{Definition}

In our setting $|R\UU|$ is (countably) infinite and is indexed by points in $R\GG^+$. The notation $[0,R]^{|R\UU|}$ above indicates, that $\f_R$ is defined by declaring one of its component for each element of $|R\UU|$. As $R\UU$ is of multiplicity $4$, for each point $x\in \D{1}$ at most four coordinates of $\f_R(x)$ are non-trivial, so there is no issue with convergence and the image of $\f_R$ is indeed a subset of the Hilbert space $\ell^2$ --- the space of square summable real sequences with the usual inner product.. In conjunction with Lemma \ref{LemLip1} the last observation also implies the following lemma by the Pythagorean theorem.

\begin{Lemma}
\label{LemLip2}
 For each $R>0$, the function $\f_R$ is $2\sqrt{2}$-Lipschitz and $||\f_R||_2 \geq R/8$.
\end{Lemma}

\begin{proof}
Choose $x,y\in \D{1}$ and a path  $\gamma$ between them of length at most $2\db(x,y)$, as described in Lemma \ref{LemmaGeodesic}. As $R\GG$ is of multiplicity $4$, we can partition $\gamma$ into segments (geodesics) such that for each segment there is an associated collection of four sets of $R\UU$ covering that segment. In particular, 
\begin{enumerate}
 \item the segments are divided by points $x_i, i\in \{i=0,1,\ldots, m\}$ in that order with $x_0=x, x_m=y$, and thus
 $$
2\db(x,y) \geq \sum_{i=0}^{m-1}\db(x_i, x_{i+1});
 $$
 \item for each $i$ there exist $a_1, a_2, a_3,a_4 \in R\GG^+$ such that for each $z\in \{x_i, x_{i+1}\}$ the following holds:
 $$
p\in R\GG^+, p\neq a_j \implies \f_{R,p}(z) =0.
 $$
 \end{enumerate}
Along each segment only the four coordinates of $\f_R$ corresponding to the associate elements of $R\GG$ change. As each of these coordinates is $1$-Lipschitz by Lemma \ref{LemLip1}, the map $\f_R$ is $2\sqrt{2}$-Lipschitz along this segment by the Pythagorean Theorem and thus 
$$
||\f_R(x_i)-\f_R(x_{i+1})||_2 \leq 2\sqrt{2} \db (x_i, x_{i+1}).
$$
This formula, point (1) above, and the triangle inequality imply 
$$
||\f_R(x)-\f_R((y))||_2 \leq \sum_{i=0}^{m-1}||\f_R(x_i)-\f_R(x_{i+1})||_2 
\leq 
2\sqrt{2} \sum_{i=0}^{m-1} \db (x_i, x_{i+1})= 2\sqrt{2}\db(x,y), 
$$
hence $\f_R$ is $2\sqrt{2}$-Lipschitz.

 The bound $||\f_R||_2 \geq R/8$ holds by Lemma \ref{LemSum1}.
\end{proof}


\begin{Lemma}
\label{LemmaDistFun1}
 $\db(x,y)\geq 3R \implies {\lVert \f_R(x)-\f_R(y) \rVert}_2  \geq R\sqrt{2}/{8}$.
\end{Lemma}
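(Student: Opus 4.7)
The plan is to exploit a disjoint-support phenomenon: at Bottleneck distance at least $3R$, the sequences $\f_R(x)$ and $\f_R(y)$ cannot simultaneously have a nonzero coordinate at any index $p\in R\GG^+$. First I would observe that, by definition, $\f_{R,p}(z)>0$ if and only if $\db(p,z)<3R/2$. If some $p\in R\GG^+$ satisfied $\f_{R,p}(x)>0$ and $\f_{R,p}(y)>0$ simultaneously, the triangle inequality would force
$$
\db(x,y)\leq \db(x,p)+\db(p,y)<3R/2+3R/2=3R,
$$
contradicting the hypothesis. Hence for every $p\in R\GG^+$, at most one of $\f_{R,p}(x),\f_{R,p}(y)$ is nonzero.

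With this disjoint-support observation in hand, for each coordinate one has $|\f_{R,p}(x)-\f_{R,p}(y)|^2=\f_{R,p}(x)^2+\f_{R,p}(y)^2$, since one of the two nonnegative terms vanishes. Summing over $p\in R\GG^+$ (a finite sum for each point, by the multiplicity bound of Lemma \ref{LemmaMuDiam}) yields the Pythagorean identity
$$
{\lVert \f_R(x)-\f_R(y) \rVert}_2^2 = {\lVert \f_R(x) \rVert}_2^2 + {\lVert \f_R(y) \rVert}_2^2.
$$

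Finally I would apply the norm lower bound ${\lVert \f_R(z) \rVert}_2\geq R/8$ from Lemma \ref{LemLip2} (itself an immediate consequence of Lemma \ref{LemSum1}) to both $z=x$ and $z=y$, obtaining
$$
{\lVert \f_R(x)-\f_R(y) \rVert}_2^2 \geq 2(R/8)^2 = R^2/32,
$$
and therefore ${\lVert \f_R(x)-\f_R(y) \rVert}_2 \geq R\sqrt{2}/8$, as claimed. The only step that needs a moment of thought is the support-disjointness; once that is in place, the rest is just the Pythagorean theorem applied to orthogonal-support vectors, so I do not anticipate any substantive obstacle.
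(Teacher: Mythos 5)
Your proposal is correct and follows essentially the same route as the paper: disjoint supports of $\f_R(x)$ and $\f_R(y)$ (i.e., orthogonality) when $\db(x,y)\geq 3R$, then the Pythagorean theorem combined with the lower bound $\lVert \f_R(z)\rVert \geq R/8$ from Lemma \ref{LemLip2}. Your explicit triangle-inequality justification of the disjointness is in fact a slightly cleaner substitute for the paper's appeal to the diameter bound in Lemma \ref{LemmaMuDiam}, but the argument is the same.
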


\begin{proof}
 By Lemma \ref{LemmaMuDiam} there is no element of $R\UU$ containing both $x$ and $y$. Consequently we have $\f_{R}(x) \perp \f_{R}(y)$ and the conclusion follows from Lemma \ref{LemLip2} by the Pythagorean theorem.
\end{proof}

We summarize the distortion of the map $\f_R$ by Figure \ref{Fig8}. 

\begin{figure}
\includegraphics{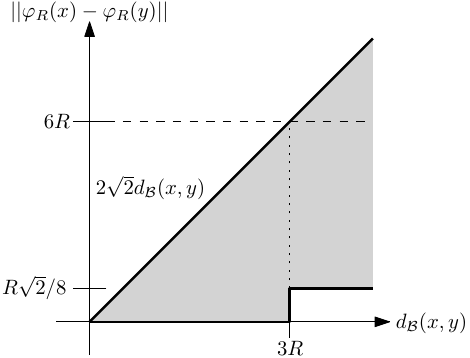}
\caption{A summary of Lemmas \ref{LemLip2} and \ref{LemmaDistFun1}. The grey region represents the potential values of $||\varphi_R(x)-\varphi_R(y) ||_2$  according to the said two lemmas. As the cover is of multiplicity $4$, and $||\varphi_{R,p}||_2 \leq 3R/2$, we actually also get $||\varphi_R(x)-\varphi_R(y) ||_2 \leq 6R$, as indicated by the dashed line. This upper bound for $||\varphi_r||_2$ suggests that in order to obtain a coarse embedding, maps $\varphi_R$ at multiple scales should be combined. However, as this upper  does not improve the Lipschitz constant of the upper bound on distortion, it is not used in the subsequent figures of the distortions nor in the final bounds. Its involvement in the argument would yield a slightly lower, but more complicated upper bound on distortion, hence we kept the current formulation of the Lipschitz upper bound on distortion, as is standard in the community. }
\label{Fig8}
\end{figure}

\subsection{On $\D{n}$}
Fix $n\in \{1,2,\ldots\}$. We now generalize the observations of the last subsection to the space of persistence diagrams on $n$ points.

\begin{Definition}
 Let $R\GG^n$ denote the collection of persistence diagrams in $\D{n}$, whose points are contained in $R\GG$. In a similar manner let $R\GG^{n+}$ denote the collection of persistence diagrams in $\D{n}$, whose points are contained in $R\GG^+$.
\end{Definition}

\begin{Definition}
\label{DefCoverN}
Let 
$$
R\VV = 
\{ 
B(p, 3R/2) \mid p\in R\GG^{n+}
\}.
$$
\end{Definition}

Let us quickly demonstrate that  $\VV$ is a cover of $\D{n}$. Suppose $[x_1, x_2, \ldots, x_n]\in \D{n}$. For each $i$ there exists $g_i\in R\GG^+$ such that $\db(x_i, g_i) \leq R$. Then
$$
\db([x_1, x_2, \ldots, x_n],[g_1, g_2, \ldots, g_n]) \leq \max \db(x_i, g_i)\leq R, 
$$
hence $[x_1, x_2, \ldots, x_n]\in B([g_1, g_2, \ldots, g_n],R).$

\begin{Lemma}
\label{LemmaMuDiamN}
The multiplicity of $R\VV$ equals $4^n$. The diameter of sets in $R\VV$ is at most $3R$.
\end{Lemma}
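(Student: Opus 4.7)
The plan is to reduce both parts of the statement to the corresponding $n=1$ case established in Lemma \ref{LemmaMuDiam}. The diameter bound is immediate: every element of $R\VV$ is a ball of radius $3R/2$, so by the triangle inequality any two points in such a ball are at bottleneck distance at most $3R$.

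For the multiplicity, my plan is to exploit the quotient description $\D{n} = \D{1}^n / S_n$. First I will unpack the membership condition: a point $[x_1,\ldots,x_n]\in\D{n}$ lies in $B(p,3R/2)$ for $p=[g_1,\ldots,g_n]\in R\GG^{n+}$ iff there exists $\psi\in S_n$ with $\db(x_i,g_{\psi(i)})<3R/2$ for every $i$. Equivalently, after relabelling the multiset $[g_1,\ldots,g_n]$, we may assume $g_i\in A_i:=B(x_i,3R/2)\cap R\GG^{+}$ for each $i$. From Lemma \ref{LemmaMuDiam} we get $|A_i|\le 4$. The number of distinct multisets $[g_1,\ldots,g_n]$ arising from choices $g_i\in A_i$ is at most $\prod_i|A_i|\le 4^n$, yielding the upper bound on multiplicity.

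For the reverse inequality I will exhibit a diagram realizing $4^n$. Using Lemma \ref{LemmaMuDiam}, pick $y\in\D{1}$ that lies in exactly $4$ elements of $R\UU$. Since $R\GG$ is a discrete grid, we can translate this configuration along the grid to obtain $n$ points $y_1,\ldots,y_n\in\D{1}$, pairwise far apart, such that the four centers $C_i\subset R\GG^{+}$ containing $y_i$ satisfy $C_i\cap C_j=\emptyset$ for $i\ne j$. For the diagram $[y_1,\ldots,y_n]$, the sets $A_i$ from the previous paragraph are precisely the disjoint collections $C_i$. Because the $C_i$ are pairwise disjoint, in any multiset $[g_1,\ldots,g_n]$ containing $[y_1,\ldots,y_n]$ the matching $g\mapsto$ its parent group $C_i$ is forced, so such multisets correspond bijectively to the $\prod_i|C_i|=4^n$ transversals of $(C_1,\ldots,C_n)$.

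The main obstacle I foresee is bookkeeping between ordered tuples in $\D{1}^n$ (where the product cover trivially has multiplicity $4^n$) and unordered multisets in $\D{n}$; specifically, in the lower bound one must verify that different transversals produce genuinely different multisets in $\D{n}$, which is exactly what disjointness of the $C_i$ buys us.
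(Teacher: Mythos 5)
Your argument is correct and takes essentially the same route as the paper: the diameter bound follows from doubling the radius of the $3R/2$-balls, and the multiplicity bound $4^n$ follows from the coordinatewise fact (Lemma \ref{LemmaMuDiam}) that each $x_i$ lies in at most $4$ elements of $R\UU$, transported through the quotient by $S_n$. Your extra step exhibiting a diagram where $4^n$ is actually attained (translated copies of a multiplicity-$4$ configuration with pairwise disjoint center sets $C_i$, whose disjointness forces the matching) is sound and fills in the attainment claim that the paper's proof leaves implicit.
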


\begin{proof}
A simple counting argument demonstrates that $[x_1, x_2, \ldots, x_n]\in \D{n}$ is contained in at most $4^n$ sets of $R\VV$ as each $x_i$ is contained in at most $4$ sets of $R\UU$. The argument for diameter is the same as in the proof of Lemma \ref{LemmaMuDiam}.
\end{proof}

\begin{Definition}
 For each $p\in R\GG^{n+}$ define $\f_{R,p} \colon \D{n}\to [0,R]$ as 
 $$
\f_{R,p}(x)= \max \{3R/2-\db(p,x),0\}.
 $$
\end{Definition}

\begin{Lemma}
\label{LemSum1N}
For each $z\in \D{n}$ there exist $p\in R\GG^{n+}$ such that 
$$
\f_{R,p}(z)  \geq R/8.
$$
\end{Lemma}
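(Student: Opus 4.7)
The plan is to reduce the claim to the one-point version, Lemma \ref{LemSum1}, by applying it coordinate-wise.

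First I would unpack the definitions. A point $z\in \D{n}$ is represented as $[z_1,z_2,\ldots,z_n]$ with $z_i\in \D{1}$. The condition $\f_{R,p}(z)\geq R/8$ is, by the definition of $\f_{R,p}$, equivalent to $\db(p,z)\leq 3R/2-R/8=11R/8$. So the goal is to exhibit a diagram $p\in R\GG^{n+}$ satisfying this bound.

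Next I would choose $p$ coordinate-wise. By Lemma \ref{LemSum1}, for every index $i\in\{1,\ldots,n\}$ there exists $p_i\in R\GG^+$ with $\f_{R,p_i}(z_i)\geq R/8$, i.e.\ $\db(p_i,z_i)\leq 11R/8$. Set $p=[p_1,p_2,\ldots,p_n]$, which belongs to $R\GG^{n+}$ by construction.

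Finally I would bound $\db(p,z)$. Since the bottleneck distance on $\D{n}$ is the minimum over $S_n$-permutations of the max metric on $\D{1}^n$, the identity permutation already gives
\[
\db(p,z)\ \leq\ \max_{1\le i\le n}\db(p_i,z_i)\ \leq\ 11R/8,
\]
and hence $\f_{R,p}(z)=\max\{3R/2-\db(p,z),\,0\}\geq R/8$, as required.

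There is no genuine obstacle here: the only subtlety is remembering that the quotient bottleneck metric on $\D{n}$ is bounded above by the max metric on $\D{1}^n$, so that the coordinate-wise choice of $p_i$'s really does transport the bound $R/8$ from Lemma \ref{LemSum1} to the $n$-point setting.
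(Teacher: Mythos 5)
Your proof is correct and follows essentially the same route as the paper: apply Lemma \ref{LemSum1} coordinate-wise to get $p_i\in R\GG^+$ with $\db(p_i,z_i)\leq 11R/8$, and then bound the quotient bottleneck distance by the max metric via the identity matching. The one subtlety you highlight (that $\db$ on $\D{n}$ is dominated by the coordinate-wise max) is exactly the step the paper invokes as well, so there is nothing to add.
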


\begin{proof}
 Choose $[x_1, x_2, \ldots, x_n]\in \D{n}$. For each $i$ there exists $g_i \in R\GG$ such that $\f_{R,g_i}(x_i) \geq R/8$ by Lemma \ref{LemSum1}, or equivalently, $\db(x_i, g_i)\leq 11R/8$. By the definition of the bottleneck distance we have 
 $$
 \db([x_1, x_2, \ldots, x_n],[g_1, g_2, \ldots, g_n]) \leq 11R/8,
 $$
 which implies $\f_{R,[g_1, g_2, \ldots, g_n]}([x_1, x_2, \ldots, x_n]) \geq R/8$.
\end{proof}

\begin{Lemma}
\label{LemLip1N}
 For each $p\in R\GG^{n+}$ function $\f_{R,p}$ is $1$-Lipschitz.
\end{Lemma}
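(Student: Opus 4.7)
The plan is to mimic the proof of Lemma \ref{LemLip1} verbatim, as the statement is formally identical except that the ambient space is now $\D{n}$ rather than $\D{1}$. The only ingredient really needed is that $\db$ on $\D{n}$ is a genuine metric (so the triangle inequality holds), which is standard and recalled in Section \ref{SectP}.

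First I would observe that the assignment $x \mapsto \db(p,x)$ is $1$-Lipschitz on $\D{n}$ by the triangle inequality: for any $x,y \in \D{n}$ one has $\db(p,x) \leq \db(p,y) + \db(y,x)$ and symmetrically, so $|\db(p,x) - \db(p,y)| \leq \db(x,y)$. Next, I would invoke the elementary fact that the function $t \mapsto \max\{3R/2 - t, 0\}$ is $1$-Lipschitz on $\RR$ (it is the composition of the $1$-Lipschitz map $t \mapsto 3R/2 - t$ with the $1$-Lipschitz map $s \mapsto \max\{s, 0\}$). Composing these two $1$-Lipschitz maps yields that $\f_{R,p}$ is $1$-Lipschitz.

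Concretely, for $x,y \in \D{n}$,
\[
|\f_{R,p}(x) - \f_{R,p}(y)| = \bigl|\max\{3R/2 - \db(p,x), 0\} - \max\{3R/2 - \db(p,y), 0\}\bigr| \leq |\db(p,x) - \db(p,y)| \leq \db(x,y).
\]

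There is essentially no obstacle: the argument is the same as in the one-point case, since the definition of $\f_{R,p}$ on $\D{n}$ is identical in form and the only metric property used is the triangle inequality, which is inherited by the quotient metric $\db$ on $\D{n}$. If one wanted to be pedantic, one could remark that the $1$-Lipschitz property of $x \mapsto \db(p,x)$ holds on any metric space, so no special feature of persistence diagrams is required here.
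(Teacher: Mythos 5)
Your proof is correct and is exactly the argument the paper intends: the paper's proof of Lemma \ref{LemLip1N} simply says it is "a direct consequence of the triangle inequality," and your write-up spells out that same argument (the $1$-Lipschitzness of $x \mapsto \db(p,x)$ composed with the $1$-Lipschitz map $t \mapsto \max\{3R/2-t,0\}$). No gaps; you have just made the paper's one-line proof explicit.
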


\begin{proof}
 As Lemma \ref{LemLip1}, this lemma is also a direct consequence of the triangle inequality.
\end{proof}

\begin{Definition}
\label{DefFR}
Define $\f_R \colon \D{n}\to [0,3R/2]^{|R\VV|}$ as 
 $$
 \f_R(x)= (\f_{R,p}(x))_{p \in R\GG^{n+}}.
 $$
\end{Definition}

\begin{figure}
\includegraphics[width=4.5cm]{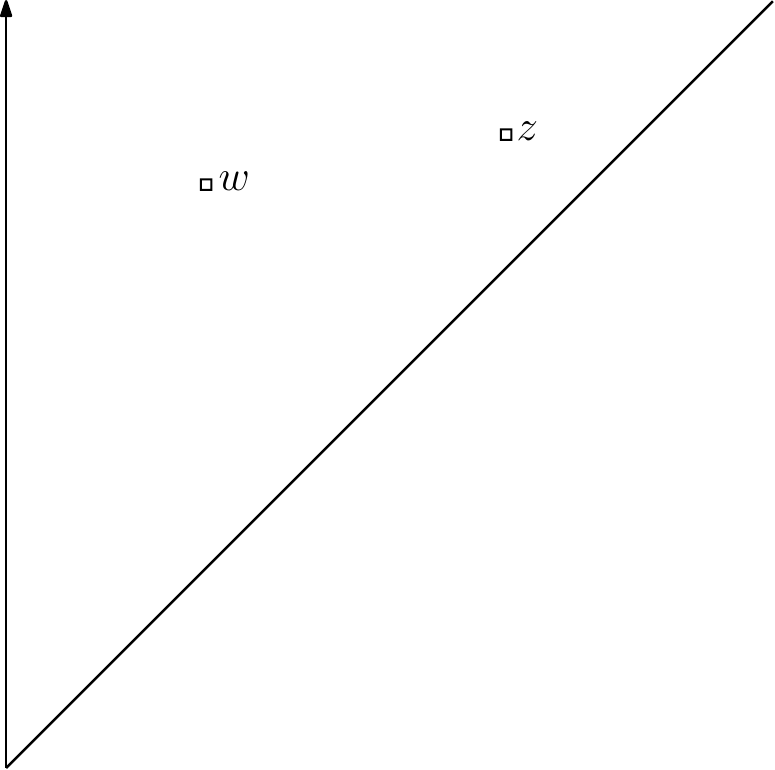}
\includegraphics[width=4.5cm]{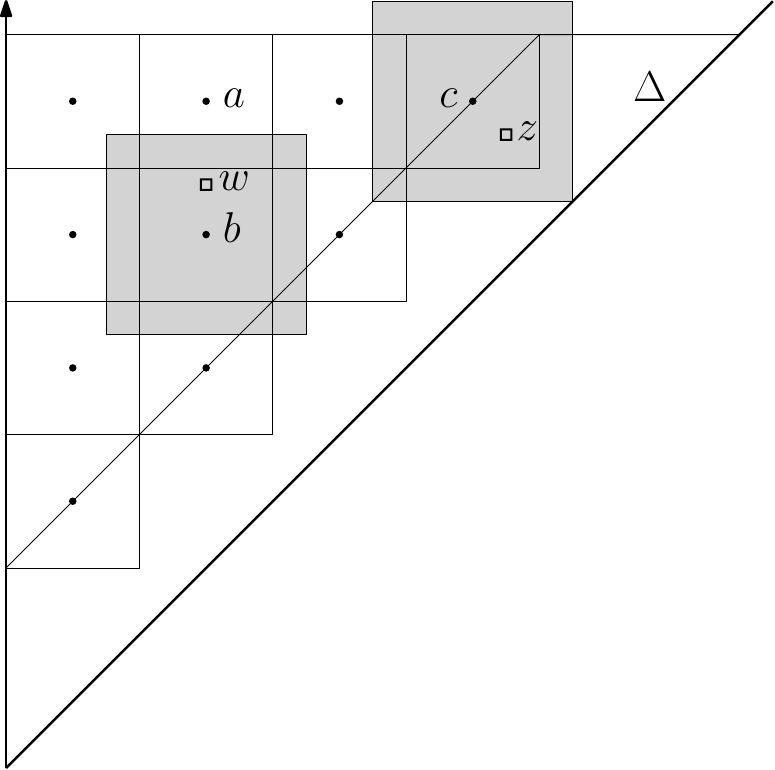}
\includegraphics[width=4.5cm]{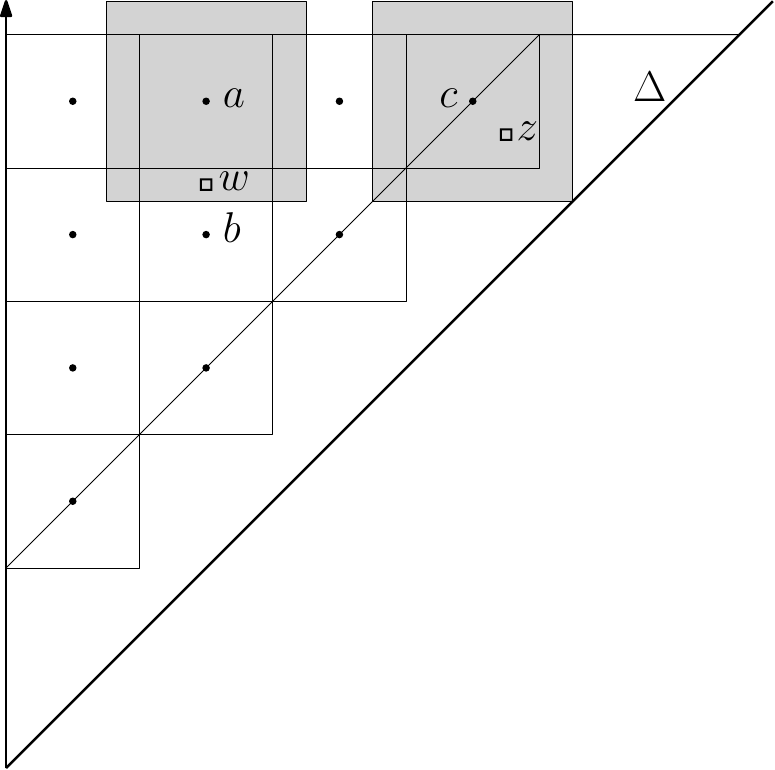}\\
\includegraphics[width=4.5cm]{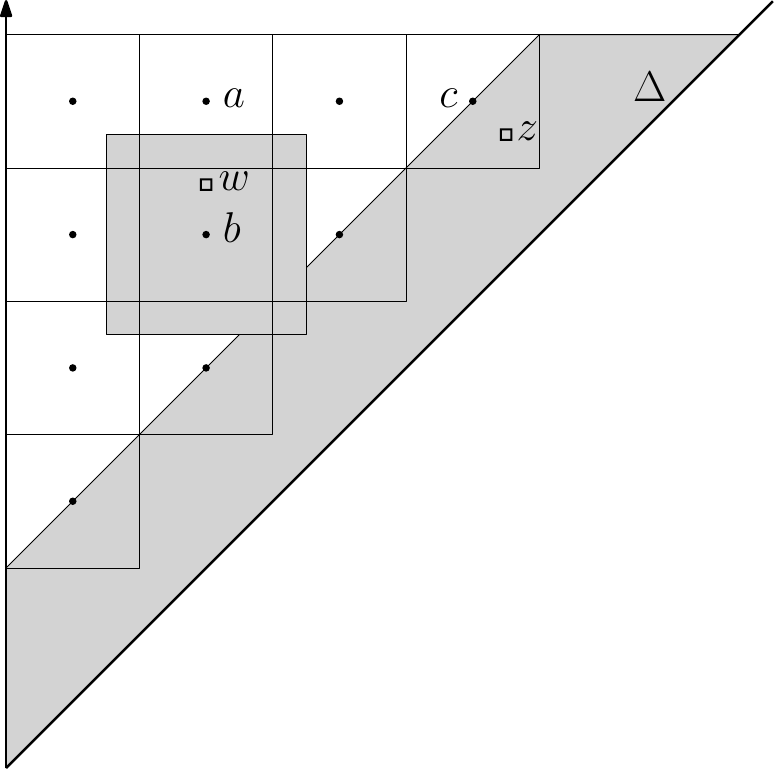}
\includegraphics[width=4.5cm]{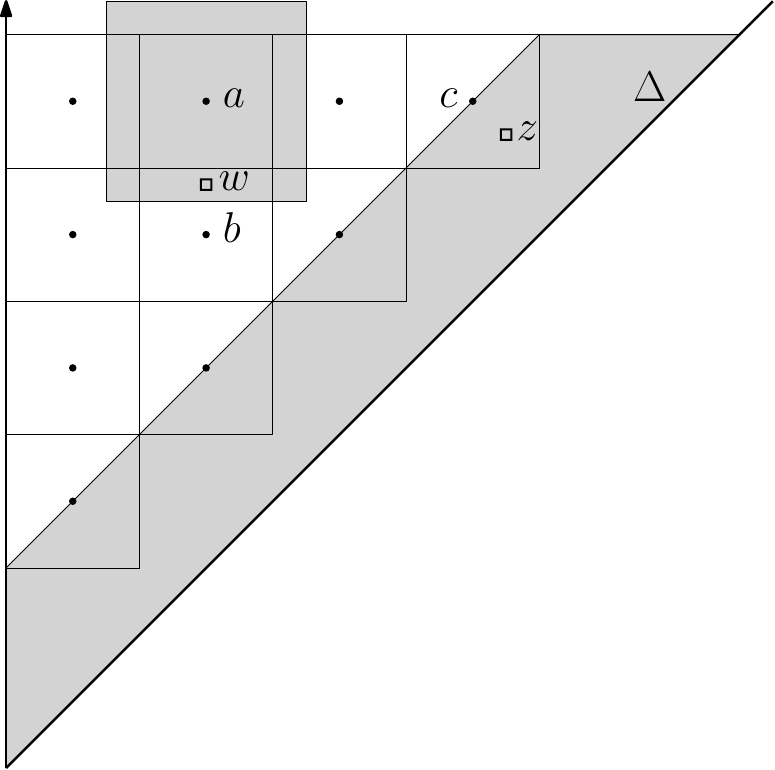}
\caption{A persistence diagram $x$ containing points $w$ and $z$ is given on the upper left portion of the figure. By Definition \ref{DefFR}, $\f_R(x)$ is a vector in Hilbert space with only four non-trivial coordinates, indicated by the other four parts of the figure. These correspond to the following diagrams of $R\GG^+: 
\{b,c\}, \{a,c\}, \{b, \Delta\}, \{a, \Delta\}$, which are listed in the same order as on the figure (see shaded portions).}
\label{Fig10}
\end{figure}

For an example explaining $\f_R$ see Figure \ref{Fig10}.  Again we note that $R\VV$ is indexed by $R\GG^{n+}$. As $R\VV$ is of finite multiplicity $4^n$ its image lies in the Hilbert space. 

\begin{Lemma}
\label{LemLip2N}
 For each $R>0$, the function $\f_R$ is $2^{n+1/2}$-Lipschitz and $||\f_R||_2 \geq R/8$.
\end{Lemma}

\begin{proof}
Choose $x,y\in \D{n}$. As in Lemma \ref{LemLip2} we use Lemma \ref{LemmaGeodesic} to
\begin{enumerate}
 \item choose a path in $\D{n}$ between $x$ and $y$ of length at most $2 \db(x,y)$;
 \item partition the path into segments, such that for each segment there is a dedicated collection of at most $4^n$ elements of $R\GG^{n+}$, whose associated functions $\f_{R,p}$ are the only potentially non-trivial functions along the segment;
 \item use Lemma \ref{LemLip1N} to deduce that along each segment $\f_G$ is $2^{n + 1/2}$-Lipschitz as $\sqrt{4^n \cdot 2}=2^{n + 1/2}$, and thus 
 $$
 ||\f_R(x)-\f_R(y)||_2 \leq  2^{n+1/2}\db(x,y), 
 $$
 by the construction of the geodesic. 
\end{enumerate}
 
 Bound $||\f_R||_2 \geq R/8$ holds by Lemma \ref{LemSum1N}.
\end{proof}

\begin{Lemma}
\label{LemmaDistFunN}
 $\db(x,y)\geq 3R \implies {\lVert \f_R(x)-\f_R(y) \rVert}_2  \geq R\sqrt{2}/{8}$.
\end{Lemma}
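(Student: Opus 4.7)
The plan is to mirror the argument from Lemma \ref{LemmaDistFun1} verbatim, substituting the $n$-point versions of the relevant ingredients. The key observation is that the supports of $\f_R(x)$ and $\f_R(y)$ in Hilbert space are disjoint under the hypothesis $\db(x,y) \geq 3R$, so that the Pythagorean theorem reduces the lower bound to a statement about the individual norms $\|\f_R(x)\|$ and $\|\f_R(y)\|$, both of which are already controlled by Lemma \ref{LemLip2N}.

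More concretely, I would proceed in three short steps. First, observe that $\f_{R,p}(z) > 0$ iff $z \in B(p, 3R/2)$. By Lemma \ref{LemmaMuDiamN}, each element of $R\VV$ has diameter at most $3R$, so no ball $B(p, 3R/2)$ with $p \in R\GG^{n+}$ can contain two points at bottleneck distance $\geq 3R$; therefore for every $p \in R\GG^{n+}$ at most one of $\f_{R,p}(x)$, $\f_{R,p}(y)$ is nonzero. Second, conclude that $\f_R(x)$ and $\f_R(y)$ have disjoint coordinate supports in $[0, 3R/2]^{|R\VV|}$, hence $\f_R(x) \perp \f_R(y)$, and by the Pythagorean theorem
\[
\|\f_R(x) - \f_R(y)\|_2^2 \;=\; \|\f_R(x)\|_2^2 + \|\f_R(y)\|_2^2.
\]
Third, apply the lower bound $\|\f_R\| \geq R/8$ from Lemma \ref{LemLip2N} to both summands, giving
\[
\|\f_R(x) - \f_R(y)\|_2^2 \;\geq\; 2 \cdot (R/8)^2 \;=\; R^2/32,
\]
and extract the square root to get $\|\f_R(x) - \f_R(y)\|_2 \geq R\sqrt{2}/8$.

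There is essentially no obstacle here: the only thing to be careful about is that the balls $B(p, 3R/2)$ are open, so the diameter bound in Lemma \ref{LemmaMuDiamN} actually yields the strict inequality needed to turn the hypothesis $\db(x,y) \geq 3R$ into support-disjointness (two points both in the open ball $B(p, 3R/2)$ satisfy $\db(x,y) < 3R$ by the triangle inequality, which is the contrapositive of what is needed). Once that is noted, the remainder is a routine application of the Pythagorean theorem combined with the already-established norm bound.
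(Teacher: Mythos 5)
Your proof is correct and follows essentially the same route as the paper: disjoint supports of $\f_R(x)$ and $\f_R(y)$ (no element of $R\VV$ contains both points), hence orthogonality, then the Pythagorean theorem combined with the lower bound $\lVert \f_R \rVert \geq R/8$ from Lemma \ref{LemLip2N}. Your remark that openness of the balls $B(p,3R/2)$ gives the strict inequality needed when $\db(x,y)=3R$ exactly is a nice touch of extra care, but otherwise this is the paper's argument.
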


\begin{proof}
 The proof is the same as that of Lemma \ref{LemmaDistFun1} using Lemma \ref{LemmaMuDiamN} and \ref{LemLip2N}.
\end{proof}

\begin{figure}
\includegraphics{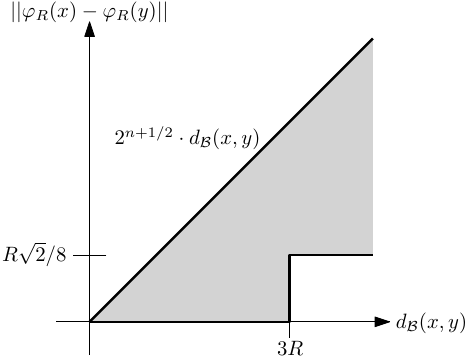}
\caption{A summary of Lemmas \ref{LemLip2N} and \ref{LemmaDistFunN}. The grey region represents the potential values of $||\varphi_R(x)-\varphi_R(y) ||$.}
\label{Fig9}
\end{figure}

\section{Incorporation of multiple scales: uniform and coarse embeddings}
\label{SScales}

In this section we assemble the Lipschitz maps constructed at each scale $R>0$ to get a map  to Hilbert space that is both coarse and uniform. We also provide explicit distortions. The general concept of such assembly is provided by Theorem \ref{ThmGluingScales}, while explicit illustrations are given in Example   \ref{ExUnif} (see also Figure \Ref{Fig11}).

\begin{Remark}
\label{Remark}
From Section \ref{SCover} we have, for each scale $R>0$, maps $\f_R$ from $\Dn$ to Hilbert space with the following properties:
\begin{enumerate}
\item  for every $x \in \D{n}$,  ${\lVert \f_R(x) \rVert}_2  \leq  3R/2$. (Definition \ref{DefFR})
\item  for every $x \in \D{n}$,  ${\lVert \f_R(x) \rVert}_2  \ge  R/8$. (Lemma \ref{LemLip2N})
\item for every $x,y \in \D{n}$,  ${\lVert \f_R(x)-\f_R(y) \rVert}_2 \le 2^{n + 1/2} \db (x,y)$. (Lemma \ref{LemLip2N})
\item   $x,y \in \D{n}$ with $\db(x,y)\geq 3R \implies {\lVert \f_R(x)-\f_R(y) \rVert}_2  \geq R\sqrt{2}/{8}$. (Lemma \ref{LemmaDistFunN})
\end{enumerate}
\end{Remark}

Lemma \ref{AssemblyOneLip} explains how to orthogonally assemble a sequence of $1$-Lipschitz maps into a single $1$-Lipschitz map.

\begin{Lemma}
\label{AssemblyOneLip}
Let $(X,d)$ be a metric space with a base point $x_0$, and $f_k \colon X \to \ell_2$ be a sequence of 1-Lipschitz  functions. For a square summable\footnote{$i.e., \displaystyle{\sum_{k=1}^{\infty} w^2_k< \infty}$. } sequence $\{w_k\}_{k=1}^{\infty}$,  the function  
$$
f (x)=\left(\frac{w_k}{ \sqrt{\sum_{i=1}^{\infty} w_i^2}}(f_k (x)-f_k(x_0))\right)_{k=1}^\infty
$$ 
defines a $1$-Lipschitz map to Hilbert space.
\end{Lemma}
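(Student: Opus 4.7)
The plan is to view $f$ as a map into the $\ell_2$-direct sum $\bigoplus_{k=1}^\infty \ell_2$, which is itself (isomorphic to) a Hilbert space. Once this perspective is in place, both well-definedness and the $1$-Lipschitz bound reduce to a single weighted Pythagoras computation, using that the squared coefficients $\alpha_k := w_k^2 / \sum_i w_i^2$ form a probability distribution. Let me abbreviate $C := \sum_{i=1}^\infty w_i^2$, which is finite and positive (we may assume at least one $w_k \neq 0$, otherwise the statement is trivial).

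First I would verify that $f(x)$ actually lies in the target Hilbert space for every $x \in X$. Since $f_k$ is $1$-Lipschitz, $\|f_k(x) - f_k(x_0)\|_2 \leq d(x,x_0)$, so
$$
\|f(x)\|^2 = \sum_{k=1}^\infty \frac{w_k^2}{C}\,\|f_k(x) - f_k(x_0)\|_2^2 \leq \frac{d(x,x_0)^2}{C}\sum_{k=1}^\infty w_k^2 = d(x,x_0)^2 < \infty.
$$
This confirms $f(x)\in \bigoplus_k \ell_2$ and explains the role of the base point: subtracting $f_k(x_0)$ is what makes the series converge uniformly on bounded subsets of $X$, since the individual norms $\|f_k(x)\|_2$ need not decay in $k$.

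The Lipschitz estimate is the same computation applied to the difference. For any $x,y \in X$, the base point terms cancel and each coordinate satisfies $\|f_k(x) - f_k(y)\|_2 \leq d(x,y)$ by hypothesis, so
$$
\|f(x) - f(y)\|^2 = \sum_{k=1}^\infty \frac{w_k^2}{C}\,\|f_k(x) - f_k(y)\|_2^2 \leq d(x,y)^2 \cdot \frac{1}{C}\sum_{k=1}^\infty w_k^2 = d(x,y)^2,
$$
which gives $\|f(x)-f(y)\| \leq d(x,y)$, as required.

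There is no real obstacle here: the lemma is essentially the statement that an $\ell_2$-weighted average of $1$-Lipschitz maps into Hilbert space is again $1$-Lipschitz, with the normalization chosen so that the weights $\{w_k^2/C\}$ sum to $1$. The only point that requires attention is the well-definedness of $f$, handled above by the triangle-inequality bound against $d(x,x_0)$; after that, the same inequality applied to pairs $(x,y)$ gives the Lipschitz constant without further work.
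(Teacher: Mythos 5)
Your proposal is correct and follows essentially the same route as the paper's proof: both establish membership in the Hilbert space via the bound $\|f(x)\|^2 \leq d(x,x_0)^2$ using the $1$-Lipschitz property of each $f_k$ and the normalization of the weights, and then repeat the identical weighted-sum estimate on $\|f(x)-f(y)\|^2$ to obtain the Lipschitz constant $1$. The only difference is presentational (your explicit framing of the target as the $\ell_2$-direct sum and the remark on the role of the base point), which the paper leaves implicit.
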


\begin{proof}
The image of $f$ is in Hilbert space as 
$$
\lVert f(x) \rVert^2=\sum_{k=1}^{\infty} \frac{w^2_k}{ \sum_{i=1}^{\infty} w_i^2} {\Vert f_k(x)-f_k(x_0) \rVert}^2 \leq {d (x,x_0)}^2.
$$ 
The definition of $f$ makes it $1$-Lipschitz. Specifically,

$$
\lVert f(x)-f(y) \rVert^2 =\sum_{k=1}^{\infty}     \frac{w_k^2}{ \sum_{i=1}^{\infty} w_i^2} \lVert f_{k}(x) - f_{k}(y) \rVert^2 \le  \sum_{k=1}^{\infty}   \frac{w_k^2}{ \sum_{i=1}^{\infty} w_i^2}  {d^2(x,y)} =  d^2(x,y).
$$
\end{proof}

For an interval $J \subset \RR$, map $\chi_{J} \colon \RR \to \RR$ is the indicator map, which equals $1$ on the interval $J$ and $0$ elsewhere.

\begin{Theorem}
[Coarse and uniform embeddings]
 \label{ThmGluingScales}
 Let  $\{R_k\}_{k=1}^\infty$ be an increasing sequence of scales,   $\{\tilde{R}_k\}_{k=1}^\infty$ be  a sequence of scales  decreasing to $0$, and  $ \{w_k\}_{k=1}^\infty$ is a unit vector of weights such that $w_kR_k \to \infty$ . Let $\f_{R_k}$, $\f_{\tilde{R}_k}$ be the maps as described above (Remark \ref{Remark}).
 
 \begin{enumerate}
 
  \item  If   $x_0\in \Dn$ is fixed, then the map $\Phi_1 \colon \Dn  \to \ell_2$ defined as
 $$
 \Phi_1 (x) = \Big(w_k 2^{-n-1/2} (\f_{R_k}(x)-\f_{R_k}(x_0))\Big)_{k=1}^{\infty}
 $$
 is $1$-Lipschitz, and is a coarse embedding with a distortion function 
 $$
 \rho_{-}= \frac{2^{-n-3}}{3}  \sum_{i=1}^{\infty} w_i R_i  \cdot \chi_{[R_i,R_{i+1})}.
 $$

 \item  The map $\Phi_2 \colon \Dn  \to \ell_2$ defined as
 $$
 \Phi_2 (x) = \Big(w_k 2^{-n-1/2}\f_{\tilde{R}_k}(x)\Big)_{k=1}^{\infty}
 $$
 is $1$-Lipschitz, and is a uniform embedding with a distortion function (using $\tilde{R}_0=\infty$)
 $$
 \tilde{\rho}_{-}= \frac{2^{-n-3}}{3} \sum_{i=1}^{\infty}  w_i \tilde{R}_i  \cdot \chi_{[\tilde{R}_i,\tilde{R}_{i-1})}.
 $$

\item The map defined by $\Phi =  \frac{1}{\sqrt{2}}(\Phi_1, \Phi_2)$ is 1-Lipschitz and is a coarse and uniform embedding with a distortion function $\frac{1}{\sqrt{2}} \sqrt{\rho^2_{-} + \tilde\rho^2_{-}}$.
\end{enumerate}
\end{Theorem}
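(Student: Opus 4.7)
The plan is to establish the three parts in turn, building $\Phi_1$ and $\Phi_2$ via Lemma \ref{AssemblyOneLip} and then combining them orthogonally for $\Phi$.

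For part (1), the first observation is that the rescaled maps $g_k := 2^{-n}\f_{R_k}$ are $1$-Lipschitz by Remark \ref{Remark}(3). Since $\{w_k\}$ is a unit vector, the normalizing denominator in Lemma \ref{AssemblyOneLip} equals $1$; applying that lemma to $(g_k)$ with the chosen base point $x_0$ gives directly that $\Phi_1$ takes values in $\ell_2$ and is $1$-Lipschitz. The distortion lower bound is the heart of this part: for $x,y$ with $t:=\db(x,y)$, I would retain only the summand in $\lVert\Phi_1(x)-\Phi_1(y)\rVert^2$ coming from a single index $k$ satisfying $3R_k\le t$. By Remark \ref{Remark}(4) the corresponding contribution is at least $w_k\cdot 2^{-n}\cdot R_k\sqrt{2}/8 = 2^{-n-2.5}\,w_k R_k$. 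For $t\in [R_i,R_{i+1})$ I would choose $k$ as large as permitted by $3R_k\le t$; the factor $\tfrac13$ in the stated $\rho_-$ conservatively absorbs the unavoidable gap between that $R_k$ and $R_i$. Since $w_iR_i\to\infty$ by hypothesis, $\rho_-(t)\to\infty$ and $\Phi_1$ is a coarse embedding.

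For part (2), the structure is identical with the scales reversed. Now $\Phi_2$ has no base-point subtraction, so I would verify separately that $\Phi_2(x)\in\ell_2$: by Remark \ref{Remark}(1), $\lVert\f_{R_k}(x)\rVert_2\le 3R_k/2$, hence
$$
\lVert\Phi_2(x)\rVert^2 \;\leq\; \tfrac94\sum_{k\ge 1} w_k^2 R_k^2 \;<\; \infty,
$$
since the bounded sequence $R_k\searrow 0$ is multiplied by a square-summable sequence of weights. The $1$-Lipschitz property follows exactly as in part (1) because a constant shift does not affect Lipschitz estimates, and the distortion bound is obtained by the same one-coordinate argument, now choosing $k$ to be the smallest index with $3R_k\le t$ for $t\in[R_i,R_{i-1})$.

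For part (3), once both $\Phi_1$ and $\Phi_2$ are $1$-Lipschitz, the orthogonality identity
$$
\lVert\Phi(x)-\Phi(y)\rVert^2 \;=\; \tfrac12\bigl(\lVert\Phi_1(x)-\Phi_1(y)\rVert^2 + \lVert\Phi_2(x)-\Phi_2(y)\rVert^2\bigr)
$$
makes the $1$-Lipschitz property of $\Phi=\tfrac{1}{\sqrt 2}(\Phi_1,\Phi_2)$ and the combined distortion $\tfrac{1}{\sqrt 2}\sqrt{\rho^2+\tilde\rho^2}$ immediate, and $\Phi$ inherits coarseness from $\Phi_1$ and uniformity from $\Phi_2$. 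The principal obstacle throughout is purely bookkeeping: tracking the index shift in parts (1) and (2) that forces the factor $\tfrac13$ into $\rho_-$, and checking monotonicity of the piecewise-constant distortion functions. All of the substantive metric content has already been packaged into Remark \ref{Remark}, so no further geometric argument is needed.
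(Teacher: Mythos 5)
Your treatment of the Lipschitz bounds (Lemma \ref{AssemblyOneLip} with $2^{-n}\f_{R_k}$ and unit weights, the separate $\ell_2$-check for $\Phi_2$) and all of part (3) matches the paper and is fine. The genuine gap is in the lower distortion bounds of (1) and (2). You apply Remark \ref{Remark}(4) only at indices $k$ with $3R_k\le t$, i.e.\ you move to a strictly smaller scale, and then assert that the factor $\tfrac13$ in the stated $\rho_-$ ``conservatively absorbs'' the gap between that $R_k$ and $R_i$. Under the theorem's hypotheses this substitution provably cannot deliver the stated $\rho_-$: first, in case (1), for $t\in[R_1,\min(3R_1,R_2))$ there is no index with $3R_k\le t$ at all, so your argument gives the lower bound $0$ where the stated $\rho_-$ equals $\frac{2^{-n-2.5}}{3}w_1R_1>0$; second, in general your method yields only $2^{-n-2.5}\sup\{w_kR_k:3R_k\le t\}$, and since the scales are merely strictly monotone (possibly with huge gaps) and $w_kR_k$ need not be monotone, this can fall short of $\frac{2^{-n-2.5}}{3}w_iR_i$ on $[R_i,R_{i+1})$ --- e.g.\ for $R_k=10^k$ and $w_k$ proportional to $1/k$, at $t=R_i$ the best admissible index is $i-1$ and $w_{i-1}R_{i-1}$ is roughly three times smaller than $\tfrac13 w_iR_i$. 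The same defect occurs in (2) with the scales reversed. What you prove suffices for the qualitative conclusions (coarse embedding, uniform embedding) but not for the quantitative distortion functions asserted in the statement.

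For comparison, the paper obtains the $\tfrac13$ by a \emph{same-scale} claim: for $\db(x,y)\ge R_k$ it asserts $\lVert\f_{R_k}(x)-\f_{R_k}(y)\rVert\ge\frac13\cdot\frac{\sqrt2}{8}R_k$, i.e.\ a strengthening of Lemma \ref{LemmaDistFunN} in which the hypothesis $3R$ is relaxed to $R$ at the price of a factor $3$ in the conclusion. Your proposal neither states nor proves an estimate of this kind, and it is exactly the nontrivial content of the step: it does not follow formally from Lemma \ref{LemmaDistFunN} (note, for instance, that the one-point diagrams $(5.5R,20R)$ and $(4.5R,20R)$ are at bottleneck distance exactly $R$ yet have identical $\f_R$-images, so a same-scale lower bound needs genuine care rather than a constant-adjustment argument). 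To close the gap you must either establish such a same-scale bound or settle for a weaker, reindexed distortion function than the one claimed in the theorem.
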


\begin{Remark}
 The map $\Phi_2$ in Theorem \ref{ThmGluingScales} could also have been defined with the  $[-\f_{\tilde{R}_k}(x_0)]$ term, which would facilitate the use of Lemma \ref{AssemblyOneLip}. However, unlike in the case of (1), the use of the said term in (2) is not necessary and we thus omit it for simplicity.
\end{Remark}

\begin{proof}
(1) The map $\Phi_1$ is $1$-Lipschitz by Lemma \ref{AssemblyOneLip}.  
Consider points $x,y \in \D{n}$ with $\db(x,y) \ge R_k$. Then, taking into consideration only scale $k$, 

$$
\lVert \Phi_1(x)-\Phi_1(y) \rVert \ge    2^{-n -1/2} w_k \lVert \f_{\frac{R_k}{3}}(x) - \f_{\frac{R_k}{3}}(y) \rVert \ge  2^{-n-1/2} w_k\frac{1}{3} R_k \sqrt{2}/8 = \frac{2^{-n-3}}{3} w_kR_k.
$$

This gives the required distortion function
$\rho_{-}(t)= \sum_{i=1}^{\infty} \frac{2^{-n-3}}{3} w_i R_i  \cdot \chi_{[R_i,R_{i+1})} (t)$ as a piecewise constant function satisfying
 $\lim_{t \to \infty} \rho_{-}(t) = \infty$.

(2) The fact that $\Phi_2$ is well defined follows from the fact that $\lVert \f_{\tilde{R}_i}(x)\rVert \leq 3 \tilde{R}_i/2, \forall x\in \Dn, \forall i$. The map $\Phi_2$ is $1$-Lipschitz by the same argument as in Lemma \ref{AssemblyOneLip}. The argument from (1) yields the stated distortion function (with the scales now being decreasing, instead of increasing as in (1)), which is piecewise constant and non-zero on positive reals.

Parts (1) and (2) clearly imply (3). 
\end{proof}

\begin{Remark}
 The distortion functions of Theorem \ref{ThmGluingScales} can be easily improved by considering all scales smaller than $R_k$ instead of just the one equal to $R_k$ in the proof. This would yield the distortion functions 
  $$
 \rho_{-}= 
 \sum_{i=1}^{\infty} 
  \sqrt{
  \sum_{j=1}^i
 \left(\frac{2^{-n-3}}{3} w_j R_j \right)^2   
 }\cdot 
 \chi_{[R_i,R_{i+1})}
 $$
 for $\Phi_1$ and
$$
 \rho_{-}= 
 \sum_{i=1}^{\infty} 
  \sqrt{
  \sum_{j=i}^\infty
 \left(\frac{2^{-n-3}}{3} w_j \tilde{R}_j \right)^2   
 }\cdot 
 \chi_{[\tilde{R}_i,\tilde{R}_{i-1})}
 $$
 for $\Phi_2$. In Theorem \ref{ThmGluingScales} and the  illustrative example below we opt for the simpler and more demonstrative (weaker) version. On the other hand, a variant of the stronger version will be used in Section \ref{BEmbedding}.
\end{Remark}


\begin{Example}
\label{ExUnif}
 \textbf{(Explicit illustration of Theorem \ref{ThmGluingScales})}
One can, for instance, take $R_k = k^2$, $\tilde{R}_k = 1/k$ and $w_k = 1/k$. See figure \ref{Fig11}.
\end{Example}

\begin{figure}
\includegraphics[width=7.5cm]{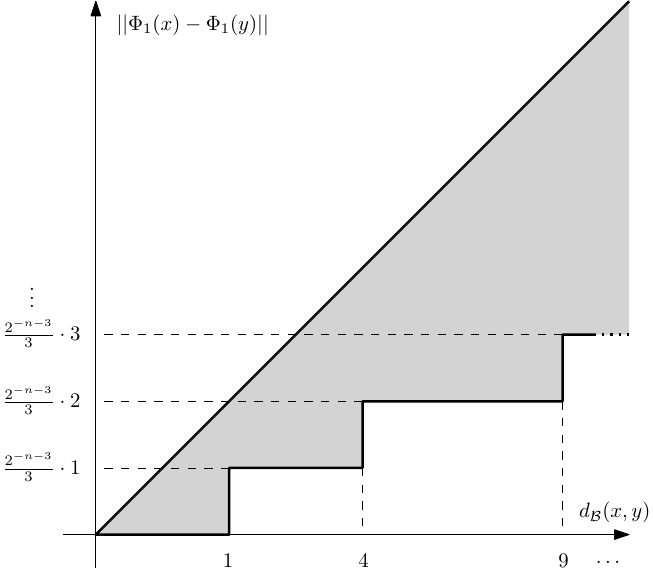}
\includegraphics[width=7.5cm]{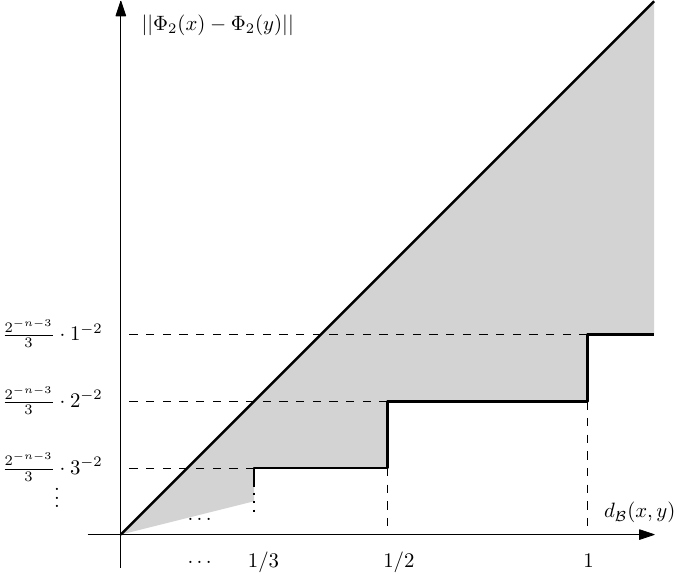}
\caption{A schematic representation of the distortion functions of maps $\Phi_1$   and $\Phi_2$ from Example \ref{ExUnif}. The representations are not to scale. The upper bound is the graph of the identity function as both functions are $\pi^2/6$-Lipschitz. The lower functions are step functions. On the left hand graph, the remaining steps are on the right as the steps are increasing (asymptotic behaviour). On the right hand graph, the remaining steps converge towards zero (local behaviour).}
\label{Fig11}
\end{figure}


\section{Controlled quasi-embeddings on bounded domains}
\label{BEmbedding}

In this section we discuss how to use our approach in the setting of data analysis. The goal is to map a collection of persistence diagrams into a convenient vector space via a $1$-Lipschitz function $\f$. To that end we will impose a number of restrictions and conditions, relevant to the setting of data analysis. We fix a frame size $L >0$ and restrict our attention to $\Dn \cap [0,L]^2$, which consists of persistence diagrams in $\Dn$ whose coordinates can be represented by pairs in $[0,L]^2$ and the diagonal. We would like to map $\Dn \cap [0,L]^2$ to a Euclidean space. We will thus construct our map by combining only finitely many maps of the form $\f_R$. The last requirement concerns distortion. One of the main advantages of our approach is that we can control the distortion of our map and thus preserve discriminative properties. In the case of finitely many scales this means the following: given a collection of scales $0 < R_1 < R_2 < \ldots < R_N \leq L$, determine the distortion parameters $\dis_i >0 $ so that if diagrams $x,y \in \Dn \cap [0,L]^2$ satisfy $\db(x,y) \geq R_i$, then $||\f(x) - \f(y)|| > \dis_i.$ In the ideal case however, we would like to obtain an explicit lower distortion function $\rho_-$. In this section we will discuss both controls. 

The map $\f$ as discussed above will be obtained as $\f = (w_i 2^{-n }\f_{R_i})_{i-1}^N$, where $(w_1, \ldots, w_N)$ is a unit vector of weights, maps $\f_{R_i}$ were defined in Definition \ref{DefFR}, and $2^{-n-1/2}$ is the normalization factor ensuring the obtained map is $1$-Lipschitz.


\begin{Theorem}
 \label{ThmBoundedDis}
 Let $L>0$ denote the frame size, let $0 < R_1 < R_2 < \ldots < R_N \leq L$ be a chosen sequence of scales, and assume $\bar{w}= (w_1, \ldots, w_N)$ is a unit vector of weights corresponding to the chosen scales. For each $i$ let $\nu_i$ denote the number of elements of the cover $R_i \VV$ intersecting $\Dn \cap [0,L]^2$. Then the map $\f \colon \Dn \cap [0,L]^2 \to \RR^{\nu_1 + \nu_2 + \ldots + \nu_N}$ defined as
 $$
 \f (x) = (w_k 2^{-n -1/2} \f_{R_k}(x))_{k=1}^N
 $$
 is $1$-Lipschitz and satisfies the following:
 if $\db(x,y) \geq R_i$, then  $\lVert \f(x)-\f(y) \rVert    \geq \frac{1}{3 \cdot 2^{n+3}}  \sqrt {  \sum_{k=1}^{i}  {w_k}^2{R_k}^2  }$.

Moreover,  in such a case we have an explicit linear lower bound as below:  
$$
\displaystyle{\rho_-(t)= \left[ \frac{1}{3 \cdot 2^{n+3}} \min \left\{ \min_{2 \leq i \leq N} \left\{   \frac{  \sqrt {  \sum_{k=1}^{i-1}  {w_k}^2{R_k}^2  }  }{R_i-R_1} \right\}, \frac{  \sqrt {  \sum_{k=1}^{N-1}  {w_k}^2{R_k}^2  + w^2_N L^2}  }{L-R_1} \right\}\right] \cdot  (t-R_1)   }
$$ 
when $t > R_1$, and $\rho_-(t)=0$ when $ 0\leq t \leq R_1$.
\end{Theorem}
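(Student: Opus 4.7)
The plan is to verify the three claims in sequence: the $1$-Lipschitz property of $\f_3$, the step-type lower bound on $\|\f_3(x)-\f_3(y)\|$ when $\db(x,y)\geq R_i$, and the explicit linear minorant $\rho_-$ on the range of admissible distances.

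The first step is direct. By Lemma \ref{LemLip2N} each $\f_{R_k}$ is $2^n$-Lipschitz, so every block $w_k 2^{-n}\f_{R_k}$ is $w_k$-Lipschitz. Since these blocks land in orthogonal factors of $\RR^{\nu_1+\cdots+\nu_N}$, the Pythagorean theorem together with the unit-norm condition $\sum_k w_k^2=1$ delivers
$$
\|\f_3(x)-\f_3(y)\|^2 = \sum_{k=1}^N w_k^2\,2^{-2n}\,\|\f_{R_k}(x)-\f_{R_k}(y)\|^2 \le \Big(\sum_{k=1}^N w_k^2\Big)\db(x,y)^2 = \db(x,y)^2.
$$

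For the step-type lower bound I would assume $\db(x,y)\geq R_i$. Then for every $k \leq i$ one has $\db(x,y) \geq R_k$, and the argument already used inside the proof of Theorem \ref{ThmGluingScales} (Lemma \ref{LemmaDistFunN} combined with the $1/3$-rescaling that bridges the gap between the hypothesis $\db \geq R$ and the hypothesis $\db \geq 3R$ actually used by that lemma) yields $\|\f_{R_k}(x)-\f_{R_k}(y)\| \geq R_k\sqrt{2}/24$. Summing squared contributions across the $i$ orthogonal blocks returns
$$
\|\f_3(x)-\f_3(y)\|^2 \geq \sum_{k=1}^i w_k^2\,2^{-2n}\,\frac{R_k^2}{288} = \frac{1}{9\cdot 2^{2n+5}}\sum_{k=1}^i w_k^2 R_k^2,
$$
and taking square roots is the displayed bound.

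For the linear minorant, the bounded-domain hypothesis forces $\db(x,y) \leq L$, so I only need a line through $(R_1,0)$ that stays below the step function from the previous paragraph on the interval $[R_1,L]$. The slope $c$ of such a line must satisfy one constraint per jump, namely $c(R_i-R_1) \leq \tfrac{1}{3\cdot 2^{n+2.5}}\sqrt{\sum_{k=1}^{i-1}w_k^2 R_k^2}$ for $i=2,\ldots,N$, plus an endpoint constraint at $t=L$ coming from the value of the (sharpened) step function there. These two families are precisely the two arguments of the inner minimum in the stated formula, and taking the pointwise minimum over admissible slopes produces the explicit $\rho_-$.

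The main obstacle, to my reading, is the endpoint constraint: the appearance of $w_N^2 L^2$ in place of the naive $w_N^2 R_N^2$ means one is not simply evaluating the step function of the previous paragraph at $t=L$, but invoking an additional scale-substitution estimate at the maximal admissible distance, in the spirit of the sharper distortion function highlighted in the remark after Theorem \ref{ThmGluingScales}. The first two steps, by contrast, are essentially bookkeeping via the Pythagorean theorem.
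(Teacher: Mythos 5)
Your proposal follows essentially the same route as the paper's proof: the $1$-Lipschitz property via the orthogonal (Pythagorean) assembly with unit weights as in Lemma \ref{AssemblyOneLip}, the step-type bound by summing over $k\le i$ the per-scale estimates $\lVert \f_{R_k}(x)-\f_{R_k}(y)\rVert \ge \sqrt{2}R_k/24$ exactly as invoked in the proof of Theorem \ref{ThmGluingScales}, and the linear minorant by minimizing the slopes of chords from $(R_1,0)$ to the bottom corners of the resulting step function. The one term you flag as unresolved, the $w_N^2L^2$ in the endpoint constraint, is likewise not derived in the paper --- its justification of the linear part is a single sentence about joining $(R_1,0)$ to the bottom corners, which taken literally yields $w_N^2R_N^2$ (agreeing with the stated formula only when $R_N=L$) --- so you have not missed an idea that the paper actually supplies.
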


\begin{proof}
Following the same argument as in  in Lemma \ref{AssemblyOneLip} along with the fact that $\bar{w}$ is an unit vector, the definition of $\f$ makes it $1$-Lipschitz.

If $\db(x,y) \geq R_i$, we have 
$$
\lVert \f(x)-\f(y) \rVert^2 = \sum_{k=1}^{N}    \frac{{w_k}^2}{2^{2n +1}} \lVert \f_{R_k}(x) - \f_{R_k}(y) \rVert^2 \geq \sum_{k=1}^{i}    \frac{{w_k}^2}{2^{2n+1}} \lVert \f_{R_k}(x) - \f_{R_k}(y) \rVert^2 \geq \sum_{k=1}^{i}    \frac{{w_k}^2}{2^{2n+1}} \left(\frac{\sqrt{2}R_k}{3 \cdot 8}\right)^2.
$$

Therefore,  $\db(x,y) \geq R_i$ then $\lVert \f(x)-\f(y) \rVert    \geq \frac{1}{3 \cdot 2^{n+3}}  \sqrt {  \sum_{k=1}^{i}  {w_k}^2{R_k}^2}$.

The explicit linear form of $\rho_{-}(t)$ is obtained by minimizing the slopes of finitely many straight lines formed by joining the point $(R_1,0)$  with the bottom corners of the steps.
\end{proof}

\begin{Example}
\label{ExampleFinDim}
For a chosen number of scales $N> 1$, one of the simplest sets of scales and weights on the interval $[m, M]$ are given by the scales $R_i=m+\frac{M-m}{N}\cdot (i-1)$ and uniform (constant) weights $w_i=1/\sqrt{N}$, where $i=1, \cdots, N$. As earlier, we define a map $\f: \D{n} \to \RR^{N \cdot 4^n}$ by $\f= \left(\frac{1}{2^{n+1/2}\sqrt{N}}\f_{R_i} \right)_{i=1}^N$, which is $1$-Lipschitz. Moreover, this gives   $\rho_{-} (t)= \sum_{i}^{N} \alpha_i  \cdot \chi_{[a_i,b_i]} (t)$, where $a_i=R_i=m+\frac{M-m}{N}\cdot (i-1)$, $b_i=m+\frac{M-m}{N}\cdot i$ and $\alpha_i= \frac{1}{3. 2^{n+3}\sqrt{N}}  \sqrt {  \sum_{k=1}^{i}  {R_k}^2  }$ with $i=1, \ldots, N$, see Figure \ref{FigPdf1}. 
In this case, the $\rho_{-} (t)$ can be replaced by a linear function $\hat\rho_{-} (t)=\frac{1}{3.2^{n+3}  } \cdot \frac{\sqrt{\lambda}}{a \sqrt{N}} \cdot (t-R_1)$ where $a= \frac{M-m}{m  N}$, and $\lambda$ is determined as follows.  

From Theorem \ref{ThmBoundedDis},   $\lambda$ is the the minimum of  the finite sequence of numbers 
$$
\left\{\frac{1+(1+a)^2+\cdots+(1+(n-1)a)^2}{n^2}\right\}_{n=1}^N = \{f(n)\}_{n=1}^N,
$$
 where  $f(x)=\frac{a^2}{3}\cdot x+(1-a+\frac{a^2}{6})\cdot \frac{1}{x} + (a-\frac{a^2}{2})$. If we assume $0 <a \leq 1$ (which holds if $N \geq \frac{M-m}{m})$,  for $x>0$ the function $f$ has positive concavity and has a minimum at $\mu=\sqrt{\frac{3}{a^2} -\frac{3}{a}+\frac{1}{2}}$. Therefore, 

\begin{equation*}
\lambda =\begin{cases}
       \min \{ f(\lfloor \mu \rfloor) ,  f(\lceil \mu \rceil) \} \quad & \text{if    } \, \mu > 1 \\
       f(1) \quad &\text{if  } \, \mu \leq 1 \\
     \end{cases}
\end{equation*}
\end{Example}

\begin{figure}
\begin{center}
\includegraphics[width = 12cm]{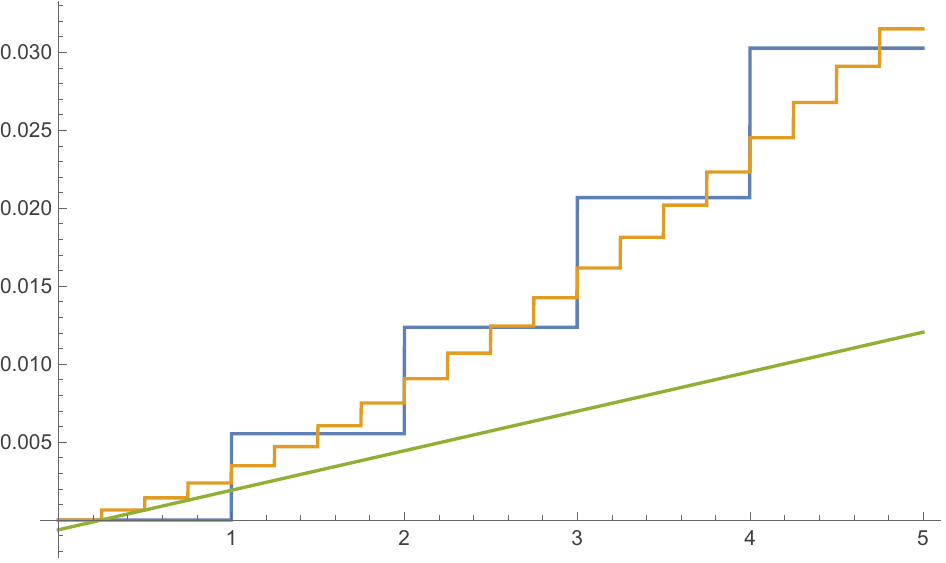}
\caption{An example of lower bounds $\rho_-$ on distortion from Example \ref{ExampleFinDim}. The number of points in persistence diagrams is $n=4$. The graphs correspond to the lower bounds on the interval $[0,5]$ with $4$ and $19$ steps (parameter $N$). We see that the larger number of steps yields a higher upper bound for small distances, while the smaller number of steps yields a higher upper bound for larger distances. The green graph represents the linear distortion $\hat\rho_{-}$.}
\label{FigPdf1}
\end{center}
\end{figure}

\subsection{On injectivity}

In the ideal case we would prefer map $\f$ to be a bi-Lipschitz embedding. However, map $\f$ cannot be bi-Lipschitz by \cite{Bauer}. Furthermore,  $\f$ is not even injective: there are infinitely many points with coordinates of the form $(a,a + \varepsilon)$ for a fixed $\varepsilon >0$, that are close to $\Delta$ and contained in exactly one element of $R_{i}\UU$ (the one containing $\Delta$) for any given $i$. The collection of such points at a fixed scale $R_i$ is given as the non-shaded region on the right part of Figure \ref{Fig2}. It should be apparent that the intersection of such regions for $i=1,\ldots, N$ contains infinitely many points. Taking two such points $(a, a + \varepsilon)$ and $(a', a' + \varepsilon)$ as diagrams in $\D{1} \cap [0,L]^2$, we see that their images by $\f$ coincide. Their non-trivial coordinates correspond to the diagonal terms at each scale. 

The mentioned argument indicates a more general phenomenon: if a map defined on  $\Dn \cap [0,L]^2$ is obtained by combing finitely many distance-to-a-diagram functions $x \mapsto \db(x, A)$, then it can not be injective. This statement formally follows from Proposition \ref{PropNotInj} and also holds if we additionally incorporate maps $\f_{R, p}$ as in our constructions (i.e., compositions of distance-to-a-diagram functions with any functions on $[0,\infty)$), or any maps whose level lines contain almost entire parallels to $\Delta$. This observation motivates the construction of an injective map from $\Dn \cap [0,L]^2$ into a Euclidean space in Example \ref{ExInjectMap}.

\begin{Proposition}
\label{PropNotInj}
Let $L>0$. For any collection of persistence diagrams $p_1, p_2, \ldots, p_k \in \Dn$, there exist uncountable many different persistence diagrams $a_t \in \Dn \cap [0,L]^2$ with  $t\in \RR$, such that for each $k$, $\db(a_t, p_k)$ is independent of $t$.
\end{Proposition}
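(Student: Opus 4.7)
The plan is to construct $a_t$ as the persistence diagram whose only off-diagonal point is $(t, t+\varepsilon)$, the remaining $n-1$ entries being $\Delta$, for a small fixed $\varepsilon > 0$. The guiding geometric picture is the zig-zag strip near the diagonal from Figure \ref{Fig2}: when $\varepsilon$ is small, $(t,t+\varepsilon)$ is so close to the diagonal that, for any off-diagonal $r$ with $\db(r,\Delta) > \varepsilon/2$, the bottleneck distance $\db((t,t+\varepsilon), r)$ is realised by the ``match both to $\Delta$'' second term in the defining formula on $\D{1}$. That term depends only on $r$, not on $t$, provided the first (direct) term does not undercut it.

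Let $S \subset \D{1}\setminus\{\Delta\}$ denote the finite collection of off-diagonal points occurring among $p_1,\ldots,p_k$, and set $\delta = \min_{r \in S}\db(r,\Delta)$ (or $\delta = L$ if $S$ is empty). I would fix any $\varepsilon$ with $0 < \varepsilon < L/(|S|+1)$ and $\varepsilon/2 < \delta$. A direct calculation from the definition of $\db$ shows that for each $r = (r_1, r_2) \in S$ the ``bad'' set of $t$, namely those for which $\max(|t-r_1|, |t+\varepsilon-r_2|) < (r_2-r_1)/2$, is an interval of length at most $\varepsilon$. Removing these $|S|$ intervals from $[0, L-\varepsilon]$ leaves a set of positive Lebesgue measure, producing uncountably many admissible $t$; the corresponding diagrams $a_t$ are pairwise distinct (their single off-diagonal points differ) and lie in $\Dn \cap [0,L]^2$.

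For such admissible $t$ I would then show that $\db(a_t, p_k)$ equals a constant $c_k$ determined by $p_k$ alone. Write $d_k = \max\{\db(r,\Delta) : r \in p_k\}$. Any matching between the $n$ points of $a_t$ and those of $p_k$ falls into two classes according to what $(t,t+\varepsilon)$ is paired with. If it is paired with a $\Delta$ of $p_k$ (available when $p_k$ has at least one diagonal entry), each off-diagonal point of $p_k$ must pair with a $\Delta$ of $a_t$, giving total cost $\max(\varepsilon/2, d_k)$. If it is paired with some off-diagonal $r_j$ of $p_k$, the admissibility condition on $t$ forces $\db((t,t+\varepsilon), r_j) = \db(r_j, \Delta)$, while the remaining off-diagonal points of $p_k$ pair with $\Delta$'s of $a_t$, giving total cost $d_k$. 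The minimum over matchings is therefore $d_k$ when $d_k > 0$ and $\varepsilon/2$ when $p_k$ is fully diagonal; in both cases the value is independent of $t$, which is precisely what is asserted.

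The only slightly delicate bookkeeping step will be the extremal subcase in which $p_k$ has all $n$ entries off-diagonal, so that no $\Delta$ of $p_k$ can absorb $(t,t+\varepsilon)$; but the second matching strategy remains valid there because $a_t$ still provides exactly $n-1$ copies of $\Delta$ to pair with the $n-1$ off-diagonal points of $p_k$ other than the chosen $r_j$, yielding the same cost $d_k$. This is the only place where one must keep careful count; everything else follows from the two-class matching analysis above.
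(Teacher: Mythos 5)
Your proof is correct, and while it exploits the same underlying phenomenon as the paper's argument --- diagrams supported at a fixed small height $\varepsilon$ above the diagonal cannot be distinguished by their bottleneck distances to finitely many fixed diagrams, because those distances are governed by the points of the $p_j$ farthest from $\Delta$ --- the execution is genuinely different. The paper fixes radii $r_j=\db\big(p_j,(\Delta,\ldots,\Delta)\big)+\varepsilon$, intersects the corresponding spheres near the diagonal to obtain one short open segment $Z$ parallel to $\Delta$ (the ``admissible set''), and takes for its uncountable family the diagrams with all $n$ points chosen from $Z$, each lying at distance exactly $r_j$ from $p_j$. You instead take $a_t$ to have the single off-diagonal point $(t,t+\varepsilon)$ together with $n-1$ copies of $\Delta$, excise from $[0,L-\varepsilon]$ the finitely many ``bad'' intervals (each of length at most $\varepsilon$, as your computation shows) on which the direct $d_\infty$ term could undercut the match-through-the-diagonal term, and then verify by a two-case matching analysis that $\db(a_t,p_k)=\max_{r\in p_k}\db(r,\Delta)$ for every admissible $t$ (and $=\varepsilon/2$ when $p_k$ is entirely diagonal); your conditions $\varepsilon<L/(|S|+1)$ and $\varepsilon/2<\delta$, and your treatment of the extremal case where all $n$ points of $p_k$ are off-diagonal, are all sound. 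What each route buys: yours is more elementary and explicit --- the parameter $t$ ranges over a set of positive measure rather than a single short segment, the common distance is computed exactly from the displayed formula for $\db$ on $\D{1}$, and no level-set geometry is needed --- whereas the paper's admissible-set formulation produces indistinguishable diagrams all of whose $n$ points are off-diagonal and casts the degeneracy in the geometric language (level lines near $\Delta$) that the surrounding discussion of non-injectivity of distance-to-diagram coordinates then reuses.
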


Rephrasing the conclusion of Proposition \ref{PropNotInj}, persistence diagrams $a_t$ are indistinguishable when analyzed through their $\db$ distances to persistence diagrams  $p_1, p_2, \ldots, p_k \in \Dn$.

\begin{proof}

 Given a persistence diagram $p$, define the matching $MSupp_p$ support as 
 \[
MSupp_p= \bigcup_{x \in p} \{z \in T \mid d_1(z,x) \leq \db(x, \Delta)\}
 \]
see Figure \ref{Fig4a}.
For any point $z \in \D{1}\setminus MSupp_p$ with $\db(z, \Delta) < \db(p, \Delta)$, the one-point persistence diagram $z$ is at distance $\db(p,\Delta)$ from $p$, as for the point $s_i$ of $p$ furthest from $\Delta$ we have $d_1(s_i, z) > \db(s_i, \Delta) > d_1(z, \Delta)$ by the definition of $MSupp_p$. In particular, $\db(z,p)=\db(p, \Delta)$ is independent of $z$.  

If all of the $p_i$ contain a non-diagonal point, then
\[
\left( \D{1} \setminus \bigcup_{i=1}^k MSupp_{p_i} \right) \ \bigcap  \ \left\{z\in \D{1} \mid \db(z, \Delta) < \db(p_i, \Delta), \forall i \right\} \ \bigcap \ [0,L]^2
\]
contains a non-degenerate square $SQ$ arbitrarily close to the diagonal, and so for all uncountable many points $z$ within it, $d(z,p_i)$ is independent of $z$.

If there exists $i_0$ such that $p_{i_0}=\{\Delta\}$, choose a square $SQ$ corresponding to the other indices $i \neq i_0$ and consider the intersection of $SQ$ with any line parallel to the diagonal, for which the intersection is a non-trivial line segment $L$. Note that $ \db(\Delta, z)$ is independent of $z$. For all the uncountable many points $z$ within $L$, $d(z,p_i)$ is thus independent of $z$.
\end{proof}

\begin{figure}
\includegraphics[width = 6cm]{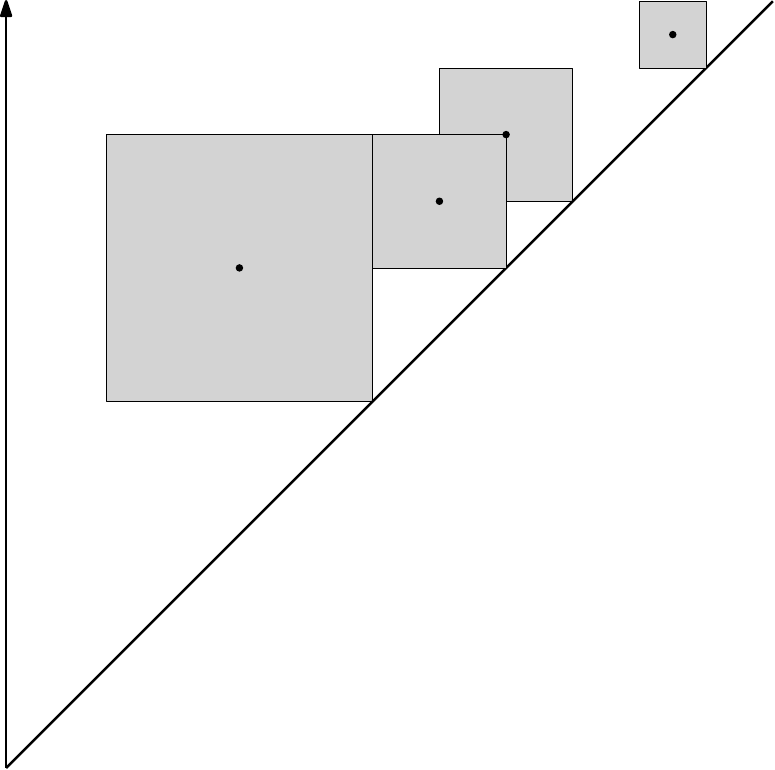} \quad 
\includegraphics[width = 6cm]{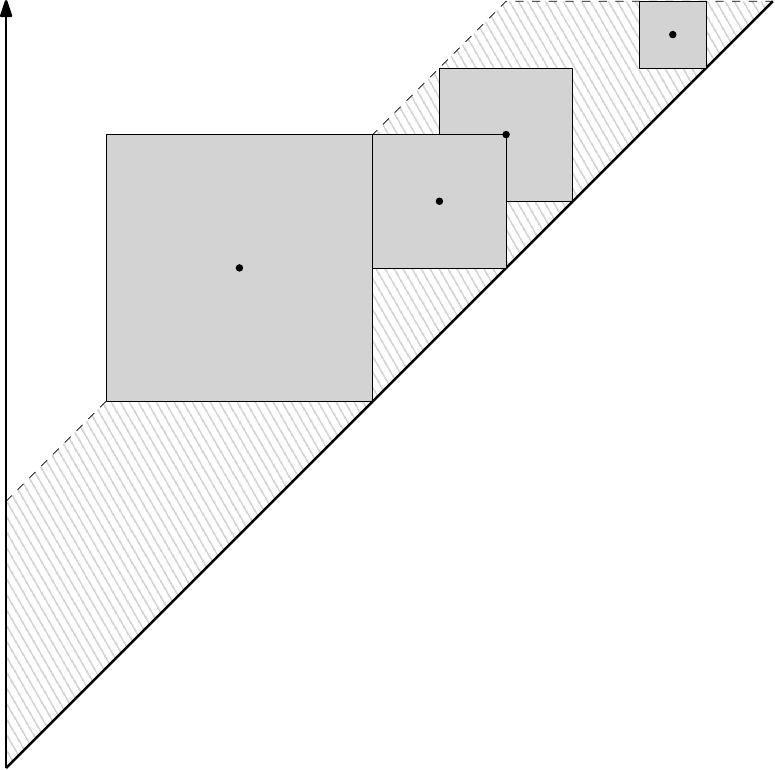}
\caption{A sketch of the proof of Proposition \ref{PropNotInj}. Given a $4$-point persistence diagram $p$ of the bulleted points, the grey are on the left represents $MSupp_p$, while the dashed are on the right represents $\left( \D{1} \setminus \bigcup_{i=1}^k MSupp_{p_i} \right) \bigcap \{z\in \D{1} \mid \db(z, \Delta) < \db(p, \Delta)\}$. Observe that any finite intersection of the latter (dashed) type of sets contains a non-degenerate square of points, which are indistinguishable to the initial finite collection of non-diagonal persistence diagrams.}
\label{Fig4a}
\end{figure}

\begin{Example}
 \label{ExInjectMap}
 In this example we construct a continuous injective map $F \colon \Dn \cap [0,L]^2 \to \RR^{n(n+1)}$. By compactness, $F$ is thus an embedding. 
 
Topologically speaking, $\D{1} \cap [0,L]^2$ is the quotient of $T=\{(x,y)\in [0,L]^2 \mid y \geq x\}$ by the diagonal set $D=\{(x,x)\mid x\in [0, L]\}$. Hence any map defined on $\D{1} \cap [0,L]^2$ arises from a continuous map on $T$ which is constant on $D$. We thus look for functions on $T$ whose level sets are $D$ and lines not parallel with $D$. Given $s < 0$, let $f_s \colon D \to [\pi/4,\pi/2)$ denote the angle with respect to $(s, s)$, i.e., $f_s(x,y) = \textrm{arg}\Big((x,y)-(s, s)\Big)$. The level lines are indicated in Figure \ref{Fig4c}. Observe that $f_s$ is well defined and continuous as $(s,s) \notin T$. 

Define $F_s \colon \Dn \cap [0,L]^2 \to [\pi/4,\pi/2)^n$ by declaring $F_s(p)$ for $p \in \Dn \cap [0,L]^2$ to be the ordered list of the $f_s$-values of the points of $p$, ordered from the smallest to the largest. Note that the same value can repeat in the list. While there is no natural ordering of the points of $p$, the map $F_s$ can be easily seen to be continuous. For example, as $f_s$ is continuous on each single point, the first coordinate of $F_s$ is continuous as it is the minimum of the $n$-many continuous functions $f_s$ applied locally to points of $p$.  

Choose different negative values $s_i, i=1, 2, \ldots, n+1$. We will prove that
$$
F = (F_{s_i})_{i=1}^{n+1} \colon \Dn \cap [0,L]^2 \to \RR^{n \left( n+1\right)}
$$
is injective using a geometric argument. 

Map $F$ is obviously continuous. In order to prove injectivity we take a point $p \in \Dn \cap [0,L]^2$ and show that $p$ can be determined from $F(p)$. For each $i=1, 2, \ldots, n+1$ let $\ell_{i}$ denote the collection of all level lines of $f_{s_i}$ in $T$, whose $f_{s_i}$ value is a coordinate of $F_{s_i}(p)$. Observe that each $\ell_i$ has at most $n$ disjoint lines. If a point of $p$ appears with multiplicity above $1$, then each $\ell_i$ consists of less than $n$ lines. 

We claim that $v \in \D{1} \cap [0,L]^2$ is a point of $p$ (of non-zero multiplicity) iff $v$ is contained in one line from each $\ell_i$. 
\begin{itemize}
 \item Assume $v$ is a point in $\D{1} \cap [0,L]^2$. For each $i$ the $f_{s_i}$, the value of $p$ appears in $F_{s_i}(p)$ and thus the corresponding line of $\ell_i$ contains $v$.
 \item Assume $v$ is contained in one line from each $\ell_i$, i.e., for each $i$ there exists $\l_i \in \ell_i$ containing $v$. By the pigeon-hole principle, there exist $j \neq j'$ such that $\l_j$ and $\l_{j'}$ correspond to the $f_j$ and $f_{j'}$ values of the same point $v'$ from $p$. Thus $v'$ is contained in the intersection $\l_j \cap \l_{j'}$. As this intersection has at most one point we conclude $v = v'$ is a point of $p$.
\end{itemize}
Next we determine the multiplicity $\mu(v)$ of a point $v$ in $p$. To the line $\l_i$ in $\ell_i$ passing through $v$ we assign the multiplicity $\mu(\l_i)$ as the number of occurrences of $f_{s_i}(v)$ in $F_{s_i}$. This is a quantity obtainable from $F(p)$. Observe that $\mu(\l_i)$ is the sum of the multiplicities of the points of $p$ contained in $\lambda$ and thus $\mu(\l_i) \geq \mu(v)$. There exists $j$ such that the only point of $p$ contained in $\l_j$ is $v$, which implies $\mu(v)= \min_{i\in \{1, 2, \ldots, n+1\}} \mu(\l_i)$. 

Wa can thus determine all points of $p$ along with their multiplicity from $F(p)$. Hence $F$ is injective. 

Observe that the same definitions and arguments prove that the definition of $F$ applied to $\Dn$ results in an injective map $\Dn \to \RR^{n(n+1)}$. However, this map is not an embedding: while the one-point diagrams $(n, n+1)$ for $n \in \NN$  form a discrete collection of diagrams in $\Dn$, their $F$ images converge towards $\pi/4=F(\Delta)$.


\begin{figure}
\includegraphics[width = 7cm]{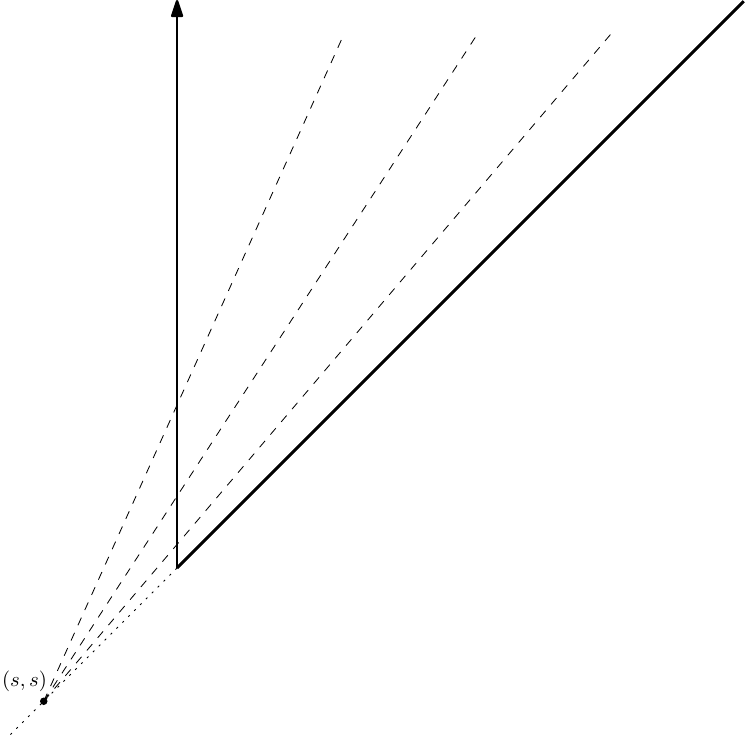}  
\caption{The dashed lines indicate the level lines of $f_s$ from Example \ref{ExInjectMap}. Observe that $f_s(D)=\pi/4$.}
\label{Fig4c}
\end{figure}
 \end{Example}



\section{Conclusion}

We have provided \textbf{explicit maps} from  spaces of persistence diagrams on $n$ points with the Bottleneck distance \footnote{as mentioned in the introduction, Proposition 3.1 in \cite{MitV} says that similar results are   valid with the Wasserstein distance} into Hilbert and Euclidean space, along with their \textbf{distortions}. Except for Example \ref{ExInjectMap}, the maps are motivated by  constructions in classical dimension theory. In order to obtain explicit maps we had to make a number of choices. Perhaps the most important one is that of a cover $R\mathcal{U}$. It was motivated by a number of factors. 

First, as it consists of balls around certain diagrams it allowed us to use the distance-to-point map in the bottleneck metric, which is a natural continuous function in $\Dn$, and the one we found easiest to implement: constructions in classical dimension theory use the distance-to-the-complement map, which seems complicated in the Bottleneck distance. Furthermore, in dimension theory these maps are usually normalized to obtain a partition of unity (which appear in the proofs of the nerve theorem as well). This appeared as an unnecessary complication which we were able to circumvent using upper and lower bounds on the functions $\f_{R,p}.$ As a result, our maps are of \textbf{very simple form} and ready for computational implementation: \textbf{collections of weighted distances} to specific ``landmark" diagrams. 

The second reason for the cover $R\mathcal{U}$  of $\Dn$ is that it was easy to determine its multiplicity. A drawback is its large multiplicity $4^n$. This contributes to the dominating term $2^{-n}$ in our distortions, making the explicit distortion functions very low. However, our results \cite{MitV, MitVCor} imply that (``at each scale $R$") there exists a cover of $\Dn$ of multiplicity $2n+1$. This would allow us to replace the $2^{-n}$ term with the much preferable $(2n+1)^{-1/2}$ term. Unfortunately, it is not clear how to get a cover of $\Dn$ of multiplicity $2n+1$. More generally, it is not clear how to obtain covers of quotients of spaces while preserving optimal multiplicity. The fact that quotients by finite group actions preserve covering dimension are usually proved using inductive dimension, while the corresponding results for asymptotic dimension use Higson compactification and the corresponding result for the covering dimension \cite{Kas}. In both cases, the proof is not constructive. 

\textbf{Further theoretical advantages.}  The above mentioned adaptations and choices lay out a general framework for explicit discriminative vectorizations in data science. Furthermore, some of the known vectorization techniques, especially persistence images \cite{Adams}, implicitly involve incorporations of vectorizations at multiple scales. Section \ref{SScales} of the current paper now theoretically shows how and why this improves distortion properties over larger scale spans, providing a justification of possible adaptations of this in other vectorization techniques.

\textbf{Practical implementation (Future work)} Our results suggests the obvious course of action: to implement the vectorization and compare it to other known vectorization techniques. In order to improve on the theoretical discrimination obtained in this paper, this will involve numerous practical adaptations, which we are addressing in an ongoing work. The plan is to:
\begin{itemize}
 \item Obtain a more convenient uniform arrangement of the landmark points in the space of persistence diagrams. When done properly, this  is expected to  change the factor $2^{-n}$ in the distortion closer to $(2n-1)^{-1/2}$, as has been explained in a paragraph above.
 \item Optimize the weights used to combine vectorizations at multiple scales.
 \item Our vectorization\textemdash as constructed at present\textemdash is fairly sparse. We are using dimension reduction techniques to bring the output dimension down while controlling the theoretical distoritions.
 \item Provide a thorough statistical analysis and comparison with other vectorization techniques.
\end{itemize}



\end{document}